\documentclass[11 pt]{amsart}
\usepackage{amscd,amsfonts,amssymb,amsmath}
\usepackage{hyperref}
\usepackage{epsfig}
\newtheorem{theorem}{Theorem}[section]
\newtheorem{corollary}[theorem]{Corollary}

\newtheorem{proposition}[theorem]{Proposition}
\theoremstyle{definition}

\newtheorem{example}[theorem]{Example}
\newtheorem{remark}[theorem]{Remark}
\numberwithin{equation}{subsection}

\usepackage[all,cmtip]{xy}
\usepackage{xcolor}
\newcommand{\Alex}{\operatorname{Alex}}
\newcommand{\Aut}{\operatorname{Aut}}

\newcommand{\Conj}{\operatorname{Conj}}
\newcommand{\Core}{\operatorname{Core}}

\newcommand{\T}{\operatorname{T}}

\newcommand{\R}{\operatorname{R}}

\begin{document}

\title{ Automorphisms and Antiautomorphisms  of quandles}
\author{Birama Sangare}

\date{\today}
\address{Sobolev Institute of Mathematics and Novosibirsk State University, Novosibirsk 630090, Russia.}

\email{b.sangare@g.nsu.ru}

\maketitle

\begin{abstract}
In this paper we provide the  conditions under which an automorphism or an antiautomorphism of a group $G$ induces an automorphism or an antiautomorphism of the $m$-conjugation quandle
$\operatorname{Conj_{m}}(G),\,\, m\in \mathbb{Z} $, the core quandle $\operatorname{Core}(G)$, the generalized Alexander quandle $\operatorname{Alex}(G,\phi)$ where $\phi\in \operatorname{Aut}(G)$ and some others. We also construct automorphisms  of these quandles that do not originate from $G$.
\\
{ \bf AMS Classification.} $\operatorname{20N02}$, $\operatorname{20B27}$, $\operatorname{20D15}$, $\operatorname{20E36}$.
\\
 {\bf Keywords:} Quandle, trivial quandle, $m$-conjugation quandle, core quandle, Alexander quandle, verbal quandle, automorphism, antiautomorphism, central automorphism.
\end{abstract}
\begin{center}
    \section*{Introduction}
\end{center}

The concept of quandles was first presented in the independent works of Joyce \cite{Joyce} and Matveev \cite{Matveev} for the classification of knots in the $3$-sphere. Currently, quandles are being studied from algebraic points of view (see, for example, \cite{VMM, Y, Andruskiewitsch}). 
One of the problems in quandle theory is the description of quandle automorphisms and antiautomorphisms. The description of quandle automorphisms  has been discussed  in  papers \cite{BS, BNS, KSS, MS, EMR, BDS}. Recent developments in quandle theory have shown that antiautomorphisms can be used as building blocks for more complex structures. For instance, quandles can be constructed over non-commutative groups using arbitrary antiautomorphisms and they play a vital role in the theory of quandle derivations and extensions (see, for example \cite{BS, MN}). Understanding these maps is therefore essential for the broader project of classifying all possible quandle structures and their representations. Since many functors from the category of groups to the category of quandles are defined explicitly via group operations, the quandle  effectively acts as a specialized projection of a group. For examples: $\operatorname{Conj}_{m}$, $\operatorname{Core}$ and $\operatorname{Alex}$ (see §\ref{sec-prelim} for the definitions). Consequently, a deep algebraic motivation exists for establishing the precise conditions under which an automorphism or an antiautomorphism of group induces  an automorphism or an  antiautomorphism of a corresponding quandle. In this paper, we study antiautomorphism of quandle by analogy  to antiautomorphism of group. We can formulate the following general problem:

{\bf Problem.} Let $\mathcal{Q}$ be a functor from the category of groups to the category of quandles, and let $G$ be a group. What are the connections between the automorphism groups $\operatorname{Aut}(G)$ and $\operatorname{Aut}(\mathcal{Q}(G))$? In particular, for which functors $\mathcal{Q}$ and groups $G$ does the equality $\operatorname{Aut}(G)=\operatorname{Aut}(\mathcal{Q}(G))$ hold?

Since $\mathcal{Q}$ is a functor, then it induces a homomorphism $\operatorname{Aut}(G)\to \operatorname{Aut}(\mathcal{Q}(G))$ for any group $G$.

In \cite{BNS} it was proved that $\operatorname{Aut(Conj}(G))=\operatorname{Aut}(G)$ if and only if the center $Z(G)$ of $G$ is trivial. For the functor Core, it is known (see \cite{BNS}) that $\operatorname{Aut(Core}(G))$ contains as a subgroup $Z(G)\rtimes\operatorname{Aut}(G)$. For the functor Alex, it was proved in \cite{BDS} that $\operatorname{Alex}(G,\phi)$ contains as a subgroup $Z(G)\rtimes C_{\operatorname{Aut}(G)}(\phi)$,  where $C_{\operatorname{Aut}(G)}(\phi)$ is the centralizer of $\phi$ in $\operatorname{Aut}(G)$, i.e. $$C_{\operatorname{Aut}(G)}(\phi)=\{\varphi\in\operatorname{Aut}(G)\,\,|\,\,\varphi\phi=\phi\varphi \}.$$ These two groups are equal if and only if $G$ is a finite abelian group and $\phi$ is a fixed-point-free automorphism of $G$.

 In this paper, we study problem above for automorphism groups $\operatorname{Aut}(G)$ and $\operatorname{Aut}(\mathcal{Q}(G))$, but also for the sets of antiautomorphisms $\operatorname{AAut}(G)$ and $ \operatorname{AAut}(\mathcal{Q}(G))$, in a group $G$ and  quandle $\mathcal{Q}(G)$ respectively. It is known (see \cite{SNB}) that if $\varphi\in\operatorname{Aut}(G)$, then the composition $\varphi\varepsilon$, where $\varepsilon: G\to G,\,\,\varepsilon(g)=g^{-1},\,g\in G$, is an antiautomorphism and any antiautomorphism of $G$ can be constructed in this manner. Therefore, for the groups, the description of automorphisms is the same as the description of antiautomorphisms. The situation is different for quandles. In particular, we prove that there are quandles with a non-trivial group of automorphisms that do not have antiautomorphisms.
 
 In recent years, new constructions of quandles from groups were introduced. In \cite{SNB},  some analogous of generalized Alexander quandle were defined. Verbal quandles were introduced in \cite{VMM}. For example, $\operatorname{Conj}_{m}(G)$ and $\operatorname{Core}(G)$ are types of verbal quandles. Also,  \cite{VMM} describes all verbal quandles that can be defined for an arbitrary group. In \cite{MN}, this result was generalized and all one parameter verbal quandles were described.

 This paper is organized as follows. In Section \ref{sec-prelim}, we introduce the fundamental definitions and notations .
 
 In Section \ref{s2}, we introduce the set of maps $$F=\{f_{a,b}: G\to G\,\,|\,\,a,b\in G,f_{a,b}(x)=axb, x\in G \}$$  and show that it forms a group under multiplication $f_{a,b}f_{c,d}=f_{ac,db},\,a,b,c,d\in G$. The set $H$ consisting of all multiplications by central elements i.e. $$H=\{f_{a}: G\to G\,\,|\,\,a\in Z(G),f_{a}(x)=ax, x\in G \}$$ is a normal subgroup of $F$. We will prove that $H\rtimes \operatorname{Aut}(G)\leq \operatorname{Aut(Conj_m}(G)),\,\,m\in\mathbb{Z}$ (Proposition \ref{T1}). This result generalizes \cite[Proposition $4$.$7$]{BDS}. This section ends with the Proposition \ref{proposition}, in which we prove that $G^{op} \rtimes C_{\operatorname{Aut}(G)}(\phi)\leq \operatorname{Aut(Alex} (G, \phi))$. This result also generalizes \cite[Proposition $4$.$1$]{BDS}. Here, $G^{op}$ is the group that is the opposite of $G$ i.e. $G^{op}=G$ as the set and the operation $\circ$ is defined by $a\circ b=ba$ for $a,b\in G$.
 
 In  Section \ref{s3}, we prove that if the center $Z(G)$ is trivial, then $(G\times G^{op})\rtimes \operatorname{Out}(G)\leq \operatorname{Aut(Core}(G))$ (Proposition \ref{proposition 2.10}). For the dihedral quandle $\operatorname{R}_{n}$ of order $n\neq 3$, we  prove that the set of antiautomorphisms is empty (Theorem \ref{Theorem 3.4}). On the other hand side it was proved in \cite[Theorem $2$.$1$]{EMR} that $\operatorname{Aut(R}_{n})=\mathbb{Z}_{n}\rtimes\mathbb{Z}_{n}^{\times}$, where $\mathbb{Z}_{n}^{\times}$ is the group of units of $\mathbb{Z}_{n}$.\\
 In Section \ref{s4}, we consider quandles $Q_{i}:=(G, \ast_{i}),\,i=1,2,3,4$, which were introduced in \cite{SNB}. We prove that $ C_{\operatorname{Aut}(G)}(\phi)\leq \operatorname{Aut}(Q_{i})$ (Theorem \ref{theorem 6}). We also prove that  $\varphi\in C_{\operatorname{AAut}(G)}(\phi)$ lies in $\operatorname{Aut}(Q_{i})$ if and only if $\phi$ is a central automorphism (Theorem \ref{theorem 3.2}), where $$C_{\operatorname{AAut}(G)}(\phi)=\{\psi\in\operatorname{AAut}(G)\,\,|\,\,\psi\phi=\phi\psi \}$$ is the set of antiautomorphisms of $G$ that commute with $\phi$. Recall that a central automorphism of a group $G$ is an automorphism $\theta$ such that $x^{-1}\theta(x)\in Z(G)$ for all $x\in G$.\\
 In  Section \ref{s5}, we consider verbal quandles with one parameter $c\in G$. For these quandles, $\,\,P_{i}$ is defined as $(G,\ast_{i}),\,i=1,2,3,4$, which were introduced in \cite{MN}. We prove that for a given $\varphi\in \operatorname{Aut}(G)$,  $\varphi$ induces an automorphism of $P_{i}$ if and only if $c^{-1}\varphi^{-1}(c)$ belongs to $Z(G)$ (Theorem \ref{theorem 4.1}). We also prove that if $\varphi\in \operatorname{Aut}(G)$ and $c\in\operatorname{Fix}(\varphi)$, then $\varphi$ induces an antiautomorphism of $P_{i}$ if and only if $x=[x, c^{-1}],\,\,x\in G$ (Theorem \ref{theorem 4.2}), where $\operatorname{Fix}(\varphi)=\{x\in G\,\,|\,\,\varphi(x)=x\}$ and $[a,b]=a^{-1}b^{-1}ab,\,a,b\in G$. We conclude the paper with a result that proves,  if $\psi\in \operatorname{AAut}(G)$ and $c\in\operatorname{Fix}(\psi)$, then $\psi$ induces an automorphism of $P_{i}$ if and only if $[c^{2},x^{-1}]=1,\,\,x\in G$ (Theorem \ref{theorem 4.2}).

\bigskip

\section{Definitions and Notations}\label{sec-prelim}
A {\it quandle} is a non-empty set $Q$ with a binary operation $(x,y) \mapsto x * y$ satisfying the following axioms:
\begin{enumerate}
\item[(Q1)] $x*x=x$ for all $x \in Q$;
\item[(Q2)]
for any $x,y \in Q$ there exists a unique $z \in Q$ such that $x=z*y$;
\item[(Q3)] $(x*y)*z=(x*z) * (y*z)$ for all $x,y,z \in Q$.
\end{enumerate}

An algebraic system satisfying only $(Q2)$ and $(Q3)$  is called a {\it rack}. Many interesting examples of quandles come from groups.
\par

\begin{itemize}
\item A quandle  $Q$ is called {\it trivial} if $x*y=x$ for all $x, y \in Q$.  Unlike groups, a trivial quandle can have arbitrary number of elements. We denote the $n$-element trivial quandle by $\T_n$.
 \item Let $m \in \mathbb{Z}$  and $G$ be a group, then the binary operation $x*y=x^{-m}xy^{m}$ turns $G$ into the quandle $\operatorname{Conj_{m}}(G)$ called the {\it $m$-conjugation quandle} of $G$. In particular, for $m=1$ we have $\operatorname{Conj}(G)$ called the {\it conjugation quandle}. If $G$ is abelian then the quandle $\operatorname{Conj_{m}}(G)$ is trivial.
\item A group $G$ with the binary operation $x*y= yx^{-1} y$ turns the set $G$ into the quandle $\Core(G)$ called the {\it core quandle} of $G$.  In particular, if $G= \mathbb{Z}_n$, the cyclic group of order $n$, then it is called the {\it dihedral quandle} and denoted by $\R_n$.
\item Let $G$ be a group and $\phi \in \Aut(G)$. Then the set $G$ with binary operation $x * y = \phi(xy^{-1})y$ forms a quandle $\Alex(G,\phi)$ called as the  {\it generalized Alexander quandle} of $G$ with respect to $\phi$.
    
\end{itemize}
\medskip
A quandle $Q$ is called {\it commutative} if $x * y = y * x$ for all $x, y \in Q$. A quandle $Q$ is called {\it cocommutative} if $x *^{-1} y = y * ^{-1}x$ for all $x, y \in Q$.\\
A bijection $\varphi : Q \to Q$ is called an automorphism of a quandle $Q$ if for every elements $x,y \in Q$
the equality $\varphi(x\ast y)=\varphi(x)\ast \varphi(y)$ hold. An antiautomorphism $\psi$ of $Q$ is a bijection  $Q$ that reverses the order of operation of $Q$,  i.e., $\psi(x\ast y)=\psi(y)\ast\psi(x)$ for all $x,y\in Q$. 
\begin{remark}
    In \cite{EH} and \cite{LuT} a bijection $\psi:Q \to Q$ is called an antiautomorphism if $\psi(x\ast^{-1}y)=\psi(x)\ast^{-1}\psi(y)$ for all $x,y\in Q$. This defintion is not equivalent to our definition. Indeed, if $Q$ is an involutory quandle, i.e., $\ast=\ast^{-1}$, then from identity $\psi(x\ast y)=\psi(x)\ast^{-1}\psi(y)$ follows that $\psi$ is an automorphism. In our case from identity $\psi(x\ast y)=\psi(y)\ast \psi(x)$ follows that  $\psi$ is an automorphism if $Q$ is a commutative quandle.\\
  Luc Ta proved in his blog \cite{LT} that a commutative quandle is cocommutative if and only if it is an involution quandle. Therefore, any antiautomorphism of a both commutative quandle and cocommutative quandle in this paper (resp. in the papers \cite{EH} and \cite{LuT}) is also automorphism.
  \end{remark}
\medskip

 For a group $G$, an automorphism $\phi \in\mathrm{\Aut }(G)$ and an antiautomorphism $\psi \in \mathrm{AAut}(G)$, in \cite{BNS} were defined 4 types of quandles:
\begin{itemize}
    \item [-] $Q_{1}:=(G, \ast_{1}): \,\,x\ast_{1} y= y\phi(y^{-1}x),\,\, x,y \in G$ and $\phi\in \operatorname{Aut}(G)$.
    \item [-]$Q_{2}:=(G, \ast_{2}): \,\,x\ast_{2} y= y\psi(yx^{-1}),\,\, x,y \in G$ and $\psi\in \operatorname{AAut}(G)$.
    \item [-] $Q_{3}:=(G, \ast_{3}): \,\,x\ast_{3} y=\psi(xy^{-1})y,\,\, x,y \in G,\,\,\, \psi\in \operatorname{AAut}(G)$ and
    
    $x\psi(y)x^{-1}=\psi(xyx^{-1})$.
    \item [-] $Q_{4}:=(G, \ast_{4}): \,\,x\ast_{4} y= y\psi(y^{-1}x),\,\, x,y \in G,\,\,\, \psi\in \operatorname{AAut}(G)$ and 

    $x\psi(y)x^{-1}=\psi(xyx^{-1})$.
\end{itemize}
In \cite{MN}, were defined one parameter verbal quandles. Let $c$ be a fixed element of $G$, then on the set $G$ it is possible to define 4 types of quandles:
\begin{itemize}
    \item [-] ${P}_{1}:=(G,\circ_{1}): x\circ_{1}y= yc^{-1}y^{-1}xc,\,\, x,y\in G$,
    \item [-]${P}_{2}:=(G,\circ_{2}): x\circ_{2}y=yc^{-1}xy^{-1}c,\,\, x,y\in G,$
    \item [-] ${P}_{3}:=(G,\circ_{3}): x\circ_{3}y=c^{-1}y^{-1}xcy,\,\, x,y\in G,$
    \item [-]${P}_{4}:=(G,\circ_{4}): x\circ_{4}y=c^{-1}xy^{-1}cy,\,\, x,y\in G.$ 
\end{itemize}

\par

%%%%%%%%%%%%%%%%%%%%%%%

%%%%%%%%%%%%%%%%%%%%%%%%%%%%%%%%%%%%%%%%%%%%%%%%

\section{ Automorphisms and antiautomorphisms on  $\operatorname{Conj_{m}}(G)$ and  $\operatorname{Alex}(G,\phi)$ }
\label{s2}

In this section, we determine  the conditions under which an automorphism or an antiautomorphism  of a group $G$ induces an automorphism or an antiautomorphism on the $m$-conjugation quandle $\operatorname{Conj_{m}}(G)$ and the Alexander quandle $\operatorname{Alex}(G,\phi)$.   

First, we prove the following proposition, which generalizes the result in \cite[Proposition $4$.$7$]{BDS}. To prove it, we introduce the set $H$ of all multiplications by central elements i.e. $$H=\{f_{a}: G\to G\,\,|\,\,a\in Z(G),f_{a}(x)=ax, x\in G \}.$$ We then consider the semi-direct product $H\rtimes \operatorname{Aut}(G)$, in which multiplication is defined by the rule $$(f_{a},\varphi_{1})(f_{b},\varphi_{2})=(f_{a}.\varphi_{1}f_{b}\varphi_{1}^{-1},\varphi_{1}\varphi_{2})$$ for all $f_{a},f_{b}\in H$ and $\varphi_{1},\varphi_{2}\in \operatorname{Aut}(G)$.
\begin{proposition}\label{T1}
      Let $G$ be a group.
      \begin{itemize}
          \item [1)]
      For any $m\in \mathbb{Z}$, we have $H\rtimes \operatorname{Aut}(G)\leq \operatorname{Aut(Conj_m}(G))$. 
      \item[2)] If $m=1$ then $ H\rtimes\operatorname{Out}(G) \leq \operatorname{Out(Conj}(G))$.
       \end{itemize}
  \end{proposition}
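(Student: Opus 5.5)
Note first that the operation is $x*y=y^{-m}xy^{m}$ (the displayed formula $x^{-m}xy^{m}$ is clearly a typo). The plan is to show that every element of $H$ and of $\operatorname{Aut}(G)$ is a quandle automorphism of $\operatorname{Conj}_m(G)$. Then we check that the subgroup they generate inside $\operatorname{Sym}(G)$ realizes the stated semidirect product.

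\emph{Automorphisms of $G$.} For $\varphi\in\operatorname{Aut}(G)$ we have $\varphi(y^{-m}xy^m)=\varphi(y)^{-m}\varphi(x)\varphi(y)^m$. Hence $\varphi\in\operatorname{Aut}(\operatorname{Conj}_m(G))$.

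\emph{Central translations.} For $a\in Z(G)$,
$$f_a(x)*f_a(y)=(ay)^{-m}(ax)(ay)^m=a^{-m}y^{-m}\,ax\,a^{m}y^m=a\,y^{-m}xy^m=f_a(x*y),$$
using centrality of $a$ to commute all powers of $a$ past $x$ and $y$. Each $f_a$ is a bijection with inverse $f_{a^{-1}}$. Moreover $a\mapsto f_a$ is an injective homomorphism $Z(G)\to\operatorname{Sym}(G)$, so $H\cong Z(G)$ lies in $\operatorname{Aut}(\operatorname{Conj}_m(G))$.

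\emph{Semidirect product structure.} Next I would compute $\varphi f_a\varphi^{-1}=f_{\varphi(a)}$, which lies in $H$ because automorphisms preserve the center. So $\operatorname{Aut}(G)$ normalizes $H$, and $H\operatorname{Aut}(G)$ is a subgroup of $\operatorname{Aut}(\operatorname{Conj}_m(G))$. It remains to show $H\cap\operatorname{Aut}(G)=1$. If $f_a$ is a group automorphism, then $f_a(1)=a=1$. Thus the map $(f_a,\varphi)\mapsto f_a\varphi$ is an injective homomorphism from $H\rtimes\operatorname{Aut}(G)$, with the multiplication rule given before the statement, onto $H\operatorname{Aut}(G)$. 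This proves 1).

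\emph{Part 2), $m=1$.} Here $\operatorname{Inn}(G)$ coincides with $\operatorname{Inn}(\operatorname{Conj}(G))$: the inner quandle automorphism $S_y\colon x\mapsto x*y=y^{-1}xy$ is conjugation by $y$. So $\operatorname{Out}(\operatorname{Conj}(G))=\operatorname{Aut}(\operatorname{Conj}(G))/\operatorname{Inn}(G)$. Since $\operatorname{Inn}(G)\trianglelefteq\operatorname{Aut}(G)$ and $\operatorname{Inn}(G)$ fixes $H$ pointwise under conjugation ($\iota_g f_a\iota_g^{-1}=f_{gag^{-1}}=f_a$), $\operatorname{Inn}(G)$ is normal in $H\rtimes\operatorname{Aut}(G)$. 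The quotient is $H\rtimes\operatorname{Out}(G)$, where the action is well defined because inner automorphisms act trivially on $H$.

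The main obstacle is showing this quotient embeds in $\operatorname{Out}(\operatorname{Conj}(G))$, i.e.
$$(H\rtimes\operatorname{Aut}(G))\cap\operatorname{Inn}(\operatorname{Conj}(G))=\operatorname{Inn}(G).$$
Suppose $f_a\varphi=\iota_g$. Evaluating at $1$ gives $a=1$, so $\varphi=\iota_g$. Hence the intersection is exactly $\operatorname{Inn}(G)$, and the second isomorphism theorem gives the embedding $H\rtimes\operatorname{Out}(G)\le\operatorname{Out}(\operatorname{Conj}(G))$.
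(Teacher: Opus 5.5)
Your proof is correct and follows the paper's route. Show that $\operatorname{Aut}(G)$ and $H$ act as quandle automorphisms, get normality from $\varphi f_a\varphi^{-1}=f_{\varphi(a)}$ and $H\cap\operatorname{Aut}(G)=1$ by evaluating at $1$, then pass to the quotient by $\operatorname{Inn}(\operatorname{Conj}(G))$. For part 2) you supply what the paper's one-line chain of inclusions omits, and that chain is in fact written in the reverse direction: namely $\operatorname{Inn}(\operatorname{Conj}(G))=\operatorname{Inn}(G)\trianglelefteq H\rtimes\operatorname{Aut}(G)$ with quotient $H\rtimes\operatorname{Out}(G)$. Your final intersection check is harmless but redundant, since $\operatorname{Inn}(G)$ already lies inside $H\rtimes\operatorname{Aut}(G)$.
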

\begin{proof} 
 \begin{itemize}
     \item [1)]
 
Since $f_{a},\,\, a\in Z(G)$ is an automorphism of $\operatorname{Aut(Conj_{m}}(G))$, we have $$\langle\operatorname{Aut}(G),H \rangle \leq \operatorname{Aut(Conj_m}(G)).$$ Moreover 
 $$
\varphi f_{a}\varphi^{-1}(x)=\varphi(a)x=f_{\varphi(a)}(x),\,x\in G,
 $$
  then $H$ is a normal subgroup  of the group $\langle\operatorname{Aut}(G),H \rangle$. Since $\varphi(1)=1$ for every automorphism $\varphi$ of $G$, the intersection of $\operatorname{Aut}(G)$ and $H$ is trivial. Hence $$H\rtimes \operatorname{Aut}(G)=\langle\operatorname{Aut}(G),H \rangle.$$
\item[2)] We have
   $$\operatorname{Aut(Conj}(G))/Inn(\operatorname{Conj}(G))\leq (H\rtimes\operatorname{Aut}(G) )/Inn(\operatorname{Conj}(G))\leq H\rtimes(\operatorname{Aut}(G) /Inn(G)).$$
    \end{itemize}
\end{proof}
In the following theorem, we formulate a sufficient condition for an antiautomorphism of $G$ to induce an automorphism of $\operatorname{Conj_{m}}(G)$.  We also prove that there are no antiautomorphisms of $\operatorname{Conj_{m}}(G)$.

\begin{theorem}
    Let $G$ be a group and $ m \in \mathbb{Z}$. Then :
        \begin{itemize}
            \item [a)] The intersection $\operatorname{AAut}(G)\cap\Aut(\Conj_m(G))$ is nonempty if and only if $y^{2m}\in Z(G)$ for all $y\in G$.
            \item[b)]   If $G$ is a non-trivial group, then there are no antiautomorphisms  of $\operatorname{Conj_{m}}(G)$ .
        \end{itemize}
\end{theorem}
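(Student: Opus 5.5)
For part a) I will use the description of $\operatorname{AAut}(G)$ recalled in the introduction: every antiautomorphism of $G$ has the form $\psi=\varphi\varepsilon$ with $\varphi\in\operatorname{Aut}(G)$. I read the operation of $\operatorname{Conj}_m(G)$ as $x*y=y^{-m}xy^{m}$, which is what the displayed formula is meant to say. For any $\psi\in\operatorname{AAut}(G)$ I compute
\[
\psi(x*y)=\psi(y^{m})\psi(x)\psi(y^{-m})=\psi(y)^{m}\psi(x)\psi(y)^{-m}.
\]
The quandle product of the images is $\psi(x)*\psi(y)=\psi(y)^{-m}\psi(x)\psi(y)^{m}$. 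Put $u=\psi(x)$ and $v=\psi(y)$; since $\psi$ is a bijection, $u$ and $v$ range over all of $G$. So $\psi$ is an automorphism of $\operatorname{Conj}_m(G)$ if and only if $v^{m}uv^{-m}=v^{-m}uv^{m}$ for all $u,v\in G$. This is equivalent to $v^{2m}u=uv^{2m}$ for all $u,v$, i.e. $v^{2m}\in Z(G)$ for all $v\in G$.

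This condition does not depend on $\psi$. Since $\operatorname{AAut}(G)$ is never empty (it contains $\varepsilon$), it follows that either every antiautomorphism of $G$ lies in $\operatorname{Aut}(\operatorname{Conj}_m(G))$ or none does, according to whether $y^{2m}\in Z(G)$ for all $y$. That gives the equivalence in a).

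For part b) I will use only two special products: $x*1=x$ and $1*x=x^{-m}x^{m}=1$ for all $x$. Suppose $\psi$ is an antiautomorphism of $\operatorname{Conj}_m(G)$ and set $e=\psi(1)$. Applying $\psi$ to $1*x=1$ gives $\psi(x)*e=e$. Since $\psi$ is surjective, this says $z*e=e$ for every $z\in G$. By axiom (Q1) we also have $e*e=e$, so the uniqueness part of (Q2) forces $z=e$ for all $z$; alternatively one can solve $e^{-m}ze^{m}=e$ directly. Hence $G=\{e\}$ is trivial, contradicting the hypothesis.

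No step here is a real obstacle. The only care needed is in a): reversing the order inside $\psi(y^{-m}xy^{m})$ correctly, and noting that $\psi(y^{m})=\psi(y)^{m}$, which holds because $\psi$ is an antiautomorphism of $G$.
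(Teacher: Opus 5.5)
Your proof is correct and follows the paper's argument. In a) you make the same direct comparison of $\psi(y^{-m}xy^{m})$ with $\psi(x)*\psi(y)$, and you also spell out, via $\varepsilon\in\operatorname{AAut}(G)$, why a condition independent of $\psi$ gives the ``nonempty intersection'' formulation, a step the paper leaves implicit. In b) you use $1*x=1$ where the paper uses $x*1=x$ (which gives $\psi(x)=\psi(1)*\psi(x)$), but both reduce by right cancellation to $\psi(x)=\psi(1)$, so it is the same argument.
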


\begin{proof}
        \begin{itemize}

         \item[a)] 
         It is need to check that $\psi(x\ast y)=\psi(x)\ast \psi(y),\ x,y \in G$, where
$a\ast b=a^{-m}ab^{m},$ $a,b\in G$.
The right hand side is
         \begin{eqnarray*} 
    \psi(x)\ast \psi(y)&=&\psi(y)^{-m}\psi(x)\psi(y)^{m}\\&=& \psi(y^{-m})\psi(x)\psi(y^{m})\\&=&\psi(y^{m}xy^{-m})
\end{eqnarray*}
and the left hand side is $\psi(x\ast y)=\psi(y^{-m}xy^{m})$.
The equality $$\psi(x\ast y)=\psi(x)\ast \psi(y)$$ holds if and only if $y^{-m}xy^{m}=y^{m}xy^{-m}$. Hence
     $x=y^{2m}xy^{-2m}$ i.e. $y^{2m}\in Z(G)$ for any $y\in G$.

\item[b)] Let $\psi$ be an antiautomorphism of   $\operatorname{Conj_{m}}(G)$ and $x\in G$ such that $x\neq 1$.
\\
      We have  
      $$\psi(x)=\psi(x\ast1)=\psi(1)\ast\psi(x)=\psi(x)^{-m}\psi(1)\psi(x)^{m}.
      $$
      Thus $\psi(x)=\psi(1)$ which is a contradiction since $\psi$ is a bijection. 

\end{itemize}
        \end{proof}
\begin{corollary}\label{corollary2.3}
            Let $G$ be a  group. If $G$ is centerless, then there are no antiautomorphisms of $G$ that induce automorphisms of $\Conj(G)$.
        \end{corollary}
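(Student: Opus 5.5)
The plan is to apply part a) of the preceding theorem with $m=1$ and show that its criterion cannot hold in a non-trivial centerless group. We read the corollary for non-trivial $G$. For $G=\{1\}$ the identity map is both an antiautomorphism of $G$ and an automorphism of $\Conj(G)$, so non-triviality is needed and is presumably intended.

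Suppose, for contradiction, that some $\psi\in\operatorname{AAut}(G)$ induces an automorphism of $\Conj(G)$. Then $\operatorname{AAut}(G)\cap\Aut(\Conj(G))\neq\emptyset$. By part a) with $m=1$, this gives $y^{2}\in Z(G)$ for every $y\in G$. Since $G$ is centerless, $Z(G)=\{1\}$, so $y^2=1$ for all $y\in G$.

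Next, a group of exponent dividing $2$ is abelian. Indeed, for $x,y\in G$ we have
$$xy=(xy)^{-1}=y^{-1}x^{-1}=yx.$$
Hence $Z(G)=G$. Combined with $Z(G)=\{1\}$, this forces $G$ to be trivial, contradicting our assumption. So no antiautomorphism of $G$ induces an automorphism of $\Conj(G)$.

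There is no real obstacle here. The only point needing care is the degenerate case $G=\{1\}$ noted above, which we exclude explicitly; everything else follows directly from part a).
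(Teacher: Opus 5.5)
Your proof is correct and is the intended deduction: the paper gives no separate proof and treats this corollary as an immediate consequence of part a) with $m=1$, and your step that $y^2=1$ for all $y$ forces $G$ to be abelian, hence trivial, fills in the one point left implicit. Your caveat is also right: for $G=\{1\}$ the identity map is an antiautomorphism of $G$ that induces an automorphism of $\Conj(G)$, so the corollary as literally stated needs $G$ to be non-trivial.
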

        The following are two important examples that illustrate this corollary.
        \begin{example}
         It is well known that the center of the symmetric group $\Sigma_{n},\,\ n\geq 3$ is trivial. Then, by corollary \ref{corollary2.3}, no antiautomorphism of $\Sigma_{n}$ induces an automorphism on $\operatorname{Conj}(\Sigma_{n})$ for $n\geq 3$ . 
        \end{example}
 \begin{example}
     If $S$ is a set containing at least two distinct elements, then the center of the free group $F(S)$ is trivial. In this case, no antiautomorphism of $F(S)$ induces an automorphism on $\operatorname{Conj}(F(S))$.
 \end{example}
The following theorem provides sufficient conditions under which an antiautomorphism of a group $G$ induces an automorphism or antiautomorphism of the Alexander quandle $\operatorname{Alex}(G,\phi)$.  
\begin{theorem}\label{theorem 2.4}
    Let $G$ be a group, $ \operatorname{Aut(Alex }(G, \phi))$  the generalized Alexander quandle and $\psi\in C_{\operatorname{AAut}(G)}(\phi)$. Then
        \begin{itemize}
            \item [a)]
 $\psi\in \operatorname{Aut(Alex }(G, \phi))$ if and only if $\phi$ is a central automorphism,
            \item[b)]  $\psi\in \operatorname{AAut(Alex} (G, \phi))$ if only if  $G$ is an abelian.
        \end{itemize}
       
\end{theorem}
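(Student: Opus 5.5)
The plan is to translate both conditions into identities in $G$ by direct computation, using only that $\psi$ reverses products, that $\psi(g^{-1})=\psi(g)^{-1}$, and that $\psi\phi=\phi\psi$. Since $\psi$ is a bijection, we may substitute $u=\psi(x)$, $v=\psi(y)$ and let $u,v$ range over all of $G$. First we compute
$$\psi(x\ast y)=\psi\bigl(\phi(xy^{-1})y\bigr)=\psi(y)\,\phi\bigl(\psi(xy^{-1})\bigr)=v\,\phi(v^{-1}u)=v\,\phi(v)^{-1}\phi(u).$$

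For a), we need $\psi(x\ast y)=\psi(x)\ast\psi(y)=\phi(u)\phi(v)^{-1}v$. So $\psi$ is an automorphism of $\operatorname{Alex}(G,\phi)$ if and only if
$$v\,\phi(v)^{-1}\phi(u)=\phi(u)\,\phi(v)^{-1}v\quad\text{for all } u,v\in G.$$
Specialising to $u=1$ gives $v\phi(v)^{-1}=\phi(v)^{-1}v$. The general identity then says that $w_v:=v\phi(v)^{-1}$ commutes with every $\phi(u)$. Since $\phi$ is surjective, this means $w_v\in Z(G)$ for all $v$. Conversely, if every $w_v$ is central, the displayed identity holds. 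The remaining step is to check that $v\phi(v)^{-1}\in Z(G)$ for all $v$ is equivalent to $v^{-1}\phi(v)\in Z(G)$ for all $v$, i.e.\ to $\phi$ being central. If $v\phi(v)^{-1}=z$ is central, then $\phi(v)=z^{-1}v$, so $v^{-1}\phi(v)=z^{-1}$ is central; the converse is symmetric.

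For b), we need $\psi(x\ast y)=\psi(y)\ast\psi(x)=\phi(vu^{-1})u$, that is
$$v\,\phi(v)^{-1}\phi(u)=\phi(v)\,\phi(u)^{-1}u\quad\text{for all } u,v\in G.$$
Putting $u=1$ gives $v=\phi(v)^2$ for all $v$. Conversely, if $\phi(g)^2=g$ for all $g$, substituting $v=\phi(v)^2$ and $u=\phi(u)^2$ makes both sides equal $\phi(v)\phi(u)$. So the exact condition is $\phi(g)^2=g$ for all $g\in G$; equivalently, $\phi^{-1}$ is the squaring map. Necessity of commutativity then follows because $\phi^{-1}$ is a homomorphism. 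This gives $(xy)^2=x^2y^2$ for all $x,y$, i.e.\ $yx=xy$, so $G$ is abelian.

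The main obstacle is the converse direction of b). Commutativity of $G$ alone does not seem to force $\phi(g)^2=g$; for instance $\phi=\mathrm{id}$ on a nontrivial abelian group fails it. I would therefore prove b) in the sharper form: $\psi\in\operatorname{AAut}(\operatorname{Alex}(G,\phi))$ if and only if $\phi(g)^2=g$ for all $g$. This condition in particular forces $G$ to be abelian, with squaring bijective and $\phi$ its inverse. I would then read the ``if'' direction of the statement under this normalisation of $\phi$.
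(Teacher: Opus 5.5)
Your proposal is correct, and for everything the paper actually proves it follows the paper's route: expand both sides, specialise at the identity, and use that $\phi^{-1}$ is a homomorphism to force commutativity. The only difference in a) is bookkeeping. You substitute $u=\psi(x)$, $v=\psi(y)$ and arrive at $v\phi(v)^{-1}\in Z(G)$, then check explicitly that this is equivalent to $\phi$ being central. The paper instead applies $f^{-1}=(\psi\phi)^{-1}$ and arrives at $y^{-1}\phi^{-1}(y)\in Z(G)$, i.e.\ centrality of $\phi^{-1}$, leaving the equivalence with centrality of $\phi$ implicit.

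Your caveat on b) is justified. The paper's argument only shows that $\psi\in\operatorname{AAut}(\operatorname{Alex}(G,\phi))$ forces $\phi^{-1}(y)=y^2$, and hence that $G$ is abelian. It never addresses the converse, and the converse is false as stated. Take $\phi=\mathrm{id}$ on an abelian $G$ with $|G|\ge 2$ and $\psi=\mathrm{id}$, which is an antiautomorphism because $G$ is abelian. Then $\operatorname{Alex}(G,\mathrm{id})$ is the trivial quandle. A trivial quandle with at least two elements has no antiautomorphisms, since $\psi(x)=\psi(x\ast y)=\psi(y)\ast\psi(x)=\psi(y)$.

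You were right not to force a proof of that direction. Your criterion, $\phi(g)^2=g$ for all $g$, is the correct characterisation. In that case $x\ast y=\phi(xy)$, so the quandle is commutative. This is consistent with the paper's remark that antiautomorphisms of commutative quandles are automorphisms, and with a), since every automorphism of an abelian group is central.
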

 
\begin{proof}
   
    \begin{itemize}
        \item [a)] We will show that $\psi$ induces an automorphism of $\operatorname{Alex}(G,\phi )$ if and only if $\phi$ is a central automorphism. For all $x,y\in G$,
the left hand side $$\psi(x\ast y)=\psi(\phi(x)\phi(y^{-1})y)=\psi(y)\psi\phi(y^{-1})\psi\phi(x)$$
and the right hand side $$\psi(x)\ast \psi(y)=\phi(\psi(x)\psi(y)^{-1})\psi(y)=\phi\psi(x)\phi\psi(y^{-1})\psi(y).$$
Let us pose $\psi\phi=\phi\psi=f$. The equality $$\psi(x\ast y)=\psi(x)\ast \psi(y)$$ holds if and only if
$$\psi(y)f(y^{-1})f(x)=f(x)f(y^{-1})\psi(y)$$ which implies $$f^{-1}\psi(y)y^{-1}x=xy^{-1}f^{-1}\psi(y)$$ i.e.
\begin{eqnarray*}
1&=&x^{-1}yf^{-1}\psi(y)^{-1}xy^{-1}f^{-1}\psi(y)\\&=&x^{-1}yf^{-1}\psi(y^{-1})xy^{-1}xx^{-1}f^{-1}\psi(y)\\&=&[y^{-1}x,x^{-1}f^{-1}\psi(y)]\\&=&[z,z^{-1}y^{-1}f^{-1}\psi(y)]\,\,\,\text{where}\,\,\, z=y^{-1}x\\&=&[z,y^{-1}\phi^{-1}(y)],
\end{eqnarray*}
hence $y^{-1}\phi^{-1}(y)\in Z(G)$.
\item[b)] We will prove that $\psi$ induces an antiautomorphism of
$\operatorname{Alex}(G,\phi )$ if and only if $G$ is  abelian.
   For all $x,y \in G$, we have : $$\psi(x\ast y)=\psi(\phi(xy^{-1})y)=\psi(y)\psi\phi(xy^{-1})$$ and
   \begin{eqnarray*}
\psi(y)\ast\psi(x)&=&\phi(\psi(y))\phi(\psi(x)^{-1})\psi(x)\\&=&\phi\psi(y)\phi\psi(x^{-1})\psi(x)\\&=&\phi\psi(x^{-1}y)\psi(x).
\end{eqnarray*}
Let us pose $\psi\phi=\phi\psi=f$. The equality
$$\psi(x\ast y)=\psi(y)\ast\psi(x)$$ holds if and only if $$\psi(y)f(xy^{-1})=f(x^{-1}y)\psi(x).$$ Since $f$ is an antiautomorphism, we have $$xy^{-1}f^{-1}\psi(y)=f^{-1}\psi(x)x^{-1}y$$ i.e.
$$f^{-1}\psi(y)=yx^{-1}f^{-1}\psi(x)x^{-1}y.$$ Taking $x=1$ yields $f^{-1}\psi(y)=y^{2}$ . Since $f^{-1}\psi$ is an automorphism, we have $$f^{-1}\psi(a)f^{-1}\psi(b)=f^{-1}\psi(ab),\,\, a,b \in G.$$ This implies that $a^{2}b^{2}=abab$ i.e. $ab=ba$, so $G$ is abelian.
    \end{itemize}
\end{proof}
Analogously the following theorem hold.
\begin{theorem}
    Let $G$ be a group. The intersection $C_{\Aut(G)}(\phi)\cap\operatorname{AAut}(\Alex(G,\phi))$ is nonempty if and only if $G$ is abelian.
\end{theorem}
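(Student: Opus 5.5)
The plan is to mimic the computation in the proof of Theorem \ref{theorem 2.4} b), replacing the antiautomorphism $\psi$ by an automorphism $\alpha\in C_{\operatorname{Aut}(G)}(\phi)$. For all $x,y\in G$ we write out both sides of the required identity $\alpha(x\ast y)=\alpha(y)\ast\alpha(x)$. Because $\alpha$ is a homomorphism commuting with $\phi$,
\[
\alpha(x\ast y)=\phi\alpha(x)\,\phi\alpha(y)^{-1}\,\alpha(y),\qquad \alpha(y)\ast\alpha(x)=\phi\alpha(y)\,\phi\alpha(x)^{-1}\,\alpha(x).
\]
Since $\alpha$ is a bijection, we may put $a=\alpha(x)$, $b=\alpha(y)$ with $a,b$ ranging over all of $G$. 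The condition then becomes
\[
\phi(a)\phi(b)^{-1}b=\phi(b)\phi(a)^{-1}a\quad\text{for all } a,b\in G. \qquad (\ast)
\]

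\textbf{Step 1.} Specialise $(\ast)$. Taking $b=1$ gives $\phi(a)=\phi(a)^{-1}a$, that is,
\[
a=\phi(a)^{2}\quad\text{for all } a\in G.
\]
This is the analogue of the identity $f^{-1}\psi(y)=y^{2}$ in the proof of Theorem \ref{theorem 2.4} b). Setting $a=1$ gives the same relation, so nothing new arises there.

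\textbf{Step 2.} Substitute $a=\phi(a)^2$ and $b=\phi(b)^2$ back into $(\ast)$. With $p=\phi(a)$ and $q=\phi(b)$, $(\ast)$ becomes
\[
pq^{-1}q^{2}=qp^{-1}p^{2},\quad\text{that is,}\quad pq=qp.
\]
Since $\phi$ is surjective, $p$ and $q$ are arbitrary elements of $G$. Hence $G$ is abelian, which proves the ``only if'' direction.

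\textbf{Step 3.} For the converse, assume $G$ is abelian. Then $(\ast)$ rewrites as $\phi(a)^{2}b=\phi(b)^{2}a$, or equivalently $a^{-1}\phi(a)^{2}=b^{-1}\phi(b)^{2}$ for all $a,b$. Putting $b=1$ shows that $(\ast)$ holds exactly when $\phi(u)^{2}=u$ for all $u\in G$. The membership condition therefore does not depend on the particular $\alpha$, so the identity automorphism is a natural witness for nonemptiness.

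This is where I expect the main obstacle. The converse requires the identity $\phi^{2}$-type relation $u=\phi(u)^{2}$ (that is, $\phi^{-1}$ acts as squaring), and this does not follow from commutativity alone. For example, with $\phi=\mathrm{id}$ it fails on any nontrivial abelian group. The same issue is implicitly present in part b) of Theorem \ref{theorem 2.4}. I would therefore prove the converse in the form \emph{``if $G$ is abelian and $\phi(u)^2=u$ for all $u$, then every element of $C_{\operatorname{Aut}(G)}(\phi)$ is an antiautomorphism of $\operatorname{Alex}(G,\phi)$''}, checking $(\ast)$ directly from these two facts. I would flag that this extra hypothesis on $\phi$ is needed, reading the statement ``analogously'' to Theorem \ref{theorem 2.4} b), where the same squaring condition appears.
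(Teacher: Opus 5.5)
Your argument for the ``only if'' direction is correct and is essentially the one the paper intends. The paper gives no proof beyond ``analogously'' to Theorem~\ref{theorem 2.4} b). That computation likewise specializes to get $\phi^{-1}(y)=y^{2}$ and then deduces commutativity. The paper does this via multiplicativity of $\phi^{-1}$, i.e.\ $abab=a^{2}b^{2}$; you substitute $a=\phi(a)^{2}$, $b=\phi(b)^{2}$ back into $(\ast)$, which is equivalent.

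Your objection to the converse is also correct, and nothing in the paper repairs it. The analogous argument only ever derives necessary conditions, so the ``if'' direction as stated is false. Your example $\phi=\mathrm{id}$ works: $\Alex(G,\mathrm{id})$ is the trivial quandle, and $\alpha(x)=\alpha(x\ast y)=\alpha(y)\ast\alpha(x)=\alpha(y)$ excludes antiautomorphisms once $|G|\ge 2$. The paper itself also supplies a counterexample. $\operatorname{R}_n=\Alex(\mathbb{Z}_n,-\mathrm{id})$ has no antiautomorphisms for $n\ge 2$, $n\neq 3$ (Theorem~\ref{Theorem 3.4}), although $\mathbb{Z}_n$ is abelian. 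Your criterion agrees, since there $\phi(u)^{2}=u$ reads $3u=0$.

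The correct statement, which your Steps 1--3 fully prove, is this: the intersection is nonempty if and only if $\phi(u)^{2}=u$ for all $u\in G$. This condition forces $G$ to be abelian. In that case the intersection is all of $C_{\Aut(G)}(\phi)$, which then equals $\Aut(G)$, because every automorphism commutes with squaring and hence with $\phi^{-1}$. The same correction applies to Theorem~\ref{theorem 2.4} b), whose computation yields exactly the same condition $f^{-1}\psi=\phi^{-1}$ equals squaring.
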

We conclude this section with the following proposition, which  generalizes the result \cite[Proposition $4$.$1$]{BDS}.

\begin{proposition}\label{proposition}
    Let $G$ a group. Then  $G^{op} \rtimes C_{\operatorname{Aut}(G)}(\phi)\leq \operatorname{Aut(Alex} (G, \phi))$.
\end{proposition}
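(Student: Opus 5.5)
The plan is to realize $G^{op}$ inside $\operatorname{Aut}(\operatorname{Alex}(G,\phi))$ as the group of right translations, and then repeat the argument of Proposition \ref{T1}: show that this subgroup is normalized by $C_{\operatorname{Aut}(G)}(\phi)$ and meets it trivially.

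First I check that every $\varphi\in C_{\operatorname{Aut}(G)}(\phi)$ is an automorphism of $\operatorname{Alex}(G,\phi)$. Using $\varphi\phi=\phi\varphi$,
$$\varphi(x\ast y)=\varphi\phi(xy^{-1})\varphi(y)=\phi\bigl(\varphi(x)\varphi(y)^{-1}\bigr)\varphi(y)=\varphi(x)\ast\varphi(y).$$

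Next, for $a\in G$ let $r_a\colon G\to G$, $r_a(x)=xa$. This is a bijection with inverse $r_{a^{-1}}$. It is a quandle automorphism because
$$r_a(x)\ast r_a(y)=\phi(xa\,a^{-1}y^{-1})\,ya=\phi(xy^{-1})\,y\,a=r_a(x\ast y).$$
Since $r_ar_b(x)=xba$, we get $r_ar_b=r_{ba}=r_{a\circ b}$ in the notation of $G^{op}$. Moreover $r_a=\mathrm{id}$ forces $a=1$. Hence $a\mapsto r_a$ is an injective homomorphism $G^{op}\to\operatorname{Aut}(\operatorname{Alex}(G,\phi))$. Let $R$ denote its image, so $R\cong G^{op}$.

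To see that $R$ is normal in $\langle R, C_{\operatorname{Aut}(G)}(\phi)\rangle$, I compute, for $\varphi\in C_{\operatorname{Aut}(G)}(\phi)$,
$$\varphi r_a\varphi^{-1}(x)=\varphi\bigl(\varphi^{-1}(x)a\bigr)=x\varphi(a)=r_{\varphi(a)}(x).$$
So conjugation by $\varphi$ preserves $R$, and it acts on $G^{op}$ through the automorphism $\varphi$ (which is also an automorphism of $G^{op}$). For the trivial intersection, note that every $\varphi\in\operatorname{Aut}(G)$ fixes $1$, while $r_a(1)=a$; hence $R\cap C_{\operatorname{Aut}(G)}(\phi)=\{\mathrm{id}\}$. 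Since every element of the generated subgroup can be written as $r_a\varphi$, it follows that
$$\langle R, C_{\operatorname{Aut}(G)}(\phi)\rangle=R\rtimes C_{\operatorname{Aut}(G)}(\phi)\cong G^{op}\rtimes C_{\operatorname{Aut}(G)}(\phi),$$
which is the claimed subgroup of $\operatorname{Aut}(\operatorname{Alex}(G,\phi))$.

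None of these steps is really difficult. The point most likely to cause trouble is choosing the correct translations. Left translations $x\mapsto ax$ are not automorphisms in general, because $\phi$ does not commute with left multiplication. Right translations work, but they compose in reverse order, and this is exactly why the opposite group $G^{op}$ appears rather than $G$. Restricting to $a\in Z(G)$ gives back the subgroup $Z(G)\rtimes C_{\operatorname{Aut}(G)}(\phi)$ from \cite{BDS}, which confirms that this statement generalizes \cite[Proposition $4$.$1$]{BDS}.
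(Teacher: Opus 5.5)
Your proposal is correct and follows essentially the same approach as the paper. The paper realizes $G^{op}$ as the right translations $f_{1,b}(x)=xb$ and defers to the argument of \cite[Proposition 4.1]{BDS}. That argument consists of exactly the steps you carry out explicitly: the automorphism check, the conjugation formula $\varphi r_a\varphi^{-1}=r_{\varphi(a)}$, and the trivial intersection from $\varphi(1)=1$.
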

\begin{proof} Since $F\cong G^{op}$ for $a=1$, we can prove it using similair reasoning as in  \cite[Proposition.$4$.$1$]{BDS}.
\end{proof}
\section{ Automorphisms and antiautomorphisms on   $\operatorname{Core}(G)$}
\label{s3}
 For a group $G$, we determine the
connections between $\operatorname{Aut}(G)$, $\operatorname{AAut}(G)$, $\operatorname{Aut(Core}{(G)})$ and $\operatorname{AAut(Core}{(G)})$.
 Let us now prove
\begin{proposition}\label{Thm4}
    Let $G$ be a group and $\psi\in\operatorname{AAut}(G)$. Then
        \begin{itemize}
            \item [a)]
$\psi \in \operatorname{Aut(Core}{(G)})$,
            \item[b)]   $\psi\in \operatorname{AAut(Core}{(G)})$ if and only if $G$  has exponent $3$.
        \end{itemize}

\end{proposition}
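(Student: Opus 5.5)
For part a) I would substitute directly into the core operation $x\ast y=yx^{-1}y$ and use only the two defining features of an antiautomorphism of $G$: it reverses products, $\psi(ab)=\psi(b)\psi(a)$, and it commutes with inversion, $\psi(a^{-1})=\psi(a)^{-1}$. Then
$$\psi(x\ast y)=\psi(yx^{-1}y)=\psi(y)\psi(x^{-1})\psi(y)=\psi(y)\psi(x)^{-1}\psi(y)=\psi(x)\ast\psi(y).$$
The key point is that the word $yx^{-1}y$ is a palindrome, so reversing the order of the factors does not change it. Since $\psi$ is a bijection, this already shows $\psi\in\operatorname{Aut(Core}(G))$.

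For part b) I would reuse the computation from a). It gives $\psi(x\ast y)=\psi(y)\psi(x)^{-1}\psi(y)$. On the other hand,
$$\psi(y)\ast\psi(x)=\psi(x)\psi(y)^{-1}\psi(x).$$
Hence $\psi\in\operatorname{AAut(Core}(G))$ exactly when $ba^{-1}b=ab^{-1}a$ for all $a,b$ in the image of $\psi$. Because $\psi$ is surjective, this image is all of $G$. So the statement reduces to a purely group-theoretic claim, independent of $\psi$: the identity $ba^{-1}b=ab^{-1}a$ holds for all $a,b\in G$ if and only if $G$ has exponent $3$. (Here exponent $3$ is read as $g^3=1$ for all $g$, which also covers the trivial group.)

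For the forward direction I would take $a=1$. This gives $b^2=b^{-1}$, that is, $b^3=1$ for every $b\in G$.

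For the converse, which I expect to be the only step needing any thought, I would apply the exponent-$3$ hypothesis to the element $a^{-1}b$ rather than to $a$ and $b$ separately. The relation $(a^{-1}b)^3=1$ reads
$$a^{-1}b\,a^{-1}b\,a^{-1}b=1.$$
Multiplying on the left by $a$ and on the right by $b^{-1}a$ rearranges it to $ba^{-1}b=ab^{-1}a$, which is exactly the required identity. That completes b). A possible pitfall is trying to verify the identity directly from $a^3=b^3=1$ alone; this stalls, and the trick is to cube the mixed element $a^{-1}b$.
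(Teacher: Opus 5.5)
Your proof is correct and follows essentially the same route as the paper: part a) is the same palindrome computation $\psi(yx^{-1}y)=\psi(y)\psi(x)^{-1}\psi(y)$, and for part b) the paper likewise turns $\psi(y)\psi(x)^{-1}\psi(y)=\psi(x)\psi(y)^{-1}\psi(x)$ into the vanishing of the cube of a mixed element ($\psi((x^{-1}y)^{3})=1$ in its notation). Your version is slightly more careful, since you use surjectivity of $\psi$ explicitly to pass to an identity on all of $G$ and check both directions separately, whereas the paper treats the rearrangement as a single equivalence.
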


\begin{proof}

        \begin{itemize}
            \item [a)] Let us show that $\psi$ induces an automorphism of $\operatorname{Core}{(G)}$. To prove this, we need to check that $\psi(x\ast y)=\psi(x)\ast \psi(y),\ x,y \in G$. The left hand side is equal to $$\psi(x\ast y)=\psi(y)\psi(x^{-1})\psi(y)$$
and the right hand side is equal to
 $$\psi(x)\ast \psi(y)=\psi(y)\psi(x)^{-1}\psi(y)=\psi(y)\psi(x^{-1})\psi(y).$$ Therfore, we have
$\psi\in \operatorname{Aut(Core}{(G)})$.
\item[b)] For all $x,y\in G$, we have the following :
            \begin{eqnarray*}
    \psi(x\ast y)&=&\psi(yx^{-1}y)\\&=&\psi(y)\psi(x^{-1})\psi(y)\\&=& \psi(y)\psi(x)^{-1}\psi(y)
\end{eqnarray*} and
           $\psi( y)\ast\psi( x)=\psi( x)\psi( y)^{-1}\psi( x).$
  The  equality $$\psi(x\ast y)=\psi( y)\ast\psi( x)$$  holds if and only if $$\psi(y)\psi(x)^{-1}\psi(y) =\psi( x)\psi( y)^{-1}\psi( x),$$ which implies $\psi((x^{-1}y)^{3})=1$ i.e. $(x^{-1}y)^{3}=1$. Hence $\psi$ induces antiautomorphism of $\operatorname{Core}{(G)}$ if only if $G$  has exponent $3$.
\end{itemize}
\end{proof}

Analogously, the following proposition hold.
\begin{proposition}
      Let $G$ be a group and $\varphi\in\operatorname{Aut}(G)$.  Then  $\varphi\in \operatorname{AAut(Core}{(G)})$ if and only if $G$  has exponent $3$.
\end{proposition}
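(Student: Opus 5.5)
We argue exactly as in part b) of Proposition \ref{Thm4}, replacing the antiautomorphism $\psi$ by the automorphism $\varphi$. The condition that $\varphi$ induces an antiautomorphism of $\operatorname{Core}(G)$ is
$$\varphi(x\ast y)=\varphi(y)\ast\varphi(x)\quad\text{for all } x,y\in G.$$
We compute both sides.

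The left-hand side is
$$\varphi(x\ast y)=\varphi(yx^{-1}y)=\varphi(y)\varphi(x)^{-1}\varphi(y).$$
The right-hand side is
$$\varphi(y)\ast\varphi(x)=\varphi(x)\varphi(y)^{-1}\varphi(x).$$
Put $u=\varphi(x)$ and $v=\varphi(y)$. Since $\varphi$ is a bijection, $u$ and $v$ range independently over all of $G$. The condition therefore becomes the group identity
$$vu^{-1}v=uv^{-1}u\quad\text{for all } u,v\in G.$$
This no longer involves $\varphi$.

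Next we show this identity is equivalent to $(u^{-1}v)^{3}=1$. Left-multiply $vu^{-1}v=uv^{-1}u$ by $u^{-1}$ and right-multiply by $u^{-1}$. This gives
$$u^{-1}vu^{-1}vu^{-1}=v^{-1}.$$
Right-multiplying by $v$ gives $(u^{-1}v)^{3}=1$. Every step is reversible, so the two conditions are equivalent.

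Finally we show that $(u^{-1}v)^3=1$ for all $u,v$ is equivalent to $G$ having exponent $3$. If $(u^{-1}v)^3=1$ for all $u,v$, take $u=1$: then $v^3=1$ for every $v\in G$. Conversely, if $g^3=1$ for all $g\in G$, then in particular $(u^{-1}v)^3=1$ for all $u,v$. Hence $\varphi\in\operatorname{AAut}(\operatorname{Core}(G))$ if and only if $G$ has exponent $3$.

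Here "exponent $3$" is read as $g^3=1$ for all $g\in G$, which includes the trivial group, as in Proposition \ref{Thm4}.

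There is no real obstacle in this argument. The two points needing care are the rearrangement to $(u^{-1}v)^3=1$ and the observation that bijectivity of $\varphi$ lets $u,v$ be arbitrary. Neither uses any property of $\varphi$ beyond its being a bijective homomorphism.
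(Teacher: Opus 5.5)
Your proof is correct and matches the paper's approach: the paper proves this proposition only by saying it follows analogously from part b) of Proposition~\ref{Thm4}, and that argument is the same reduction of the antiautomorphism condition to $vu^{-1}v=uv^{-1}u$ and hence to $(u^{-1}v)^{3}=1$ for all $u,v\in G$. Your use of the bijectivity of $\varphi$ to let $u,v$ range over all of $G$, instead of pulling the identity back through $\varphi$, is the same step written slightly more cleanly.
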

\begin{remark}
    $G$ has exponent $3$ if and only if $\operatorname{Core}(G)$ is commutative (see \cite{LT}). Therefore if $G$ has exponent $3$, then $\operatorname{AAut(Core}{(G)})=\operatorname{Aut(Core}{(G)})$.
\end{remark}
In the following corollary, we determine a condition for the existence of an isomorphism between the group $G$ and the group $\mathbb{Z}_{3}$.
\begin{corollary}
 Let $G$ be the cyclic group of order $n$. Then $\Aut(G)\cap\Aut(\Core(G))$ is nonempty if and only if $n=3$.
    \end{corollary}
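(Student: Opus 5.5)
The statement should be read as the specialisation to cyclic groups of the preceding Proposition, which says that for $\varphi\in\Aut(G)$ we have $\varphi\in\operatorname{AAut}(\Core(G))$ if and only if $G$ has exponent $3$. Taken literally, ``$\Aut(G)\cap\Aut(\Core(G))$ nonempty'' is always true, since the identity lies in both sets. So the intended intersection must be $\Aut(G)\cap\operatorname{AAut}(\Core(G))$, i.e.\ automorphisms of $G$ that induce antiautomorphisms of $\Core(G)$. I would state this reading explicitly at the start of the proof and then prove that version.

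First, I will check the criterion behind the preceding Proposition directly, since it is a one-line computation. For $\varphi\in\Aut(G)$ the condition $\varphi(x\ast y)=\varphi(y)\ast\varphi(x)$ reads
$$\varphi(y)\varphi(x)^{-1}\varphi(y)=\varphi(x)\varphi(y)^{-1}\varphi(x).$$
With $u=\varphi(x)$ and $v=\varphi(y)$, this rearranges to $(u^{-1}v)^3=1$. Because $\varphi$ is bijective, $u^{-1}v$ runs over all of $G$ as $x,y$ do. Hence some (equivalently, every) $\varphi\in\Aut(G)$ lies in $\operatorname{AAut}(\Core(G))$ exactly when $g^3=1$ for all $g\in G$.

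Second, I will specialise to $G=\mathbb{Z}_n$. Since $\Aut(\mathbb{Z}_n)$ is never empty (it contains the identity), the intersection is nonempty if and only if every element of $\mathbb{Z}_n$ satisfies $g^3=1$. Applying this to a generator, whose order is $n$, the condition is $n\mid 3$. Conversely, if $n\mid 3$ every element has order dividing $3$ and the condition holds. This gives $n\in\{1,3\}$.

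The only delicate point is the degenerate case $n=1$. For the trivial group the condition holds vacuously, and the single map is both an automorphism and an antiautomorphism, but the exponent is $1$, not $3$. I would handle this by assuming $G$ is non-trivial ($n\geq 2$), as the paper implicitly does in speaking of exponent $3$. Under that assumption $n\mid 3$ forces $n=3$. This bookkeeping, together with fixing the misprint in the intersection, is the main obstacle; the rest is immediate from the preceding Proposition.
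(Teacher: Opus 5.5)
Your proof is correct and follows the paper's route. You read the intersection as $\Aut(G)\cap\operatorname{AAut}(\Core(G))$, reduce to the exponent-$3$ criterion of the preceding proposition, and apply it to a generator of order $n$ to get $n\mid 3$. You are also more careful than the paper, which proves only the forward direction and silently drops $n=1$, the case where the statement genuinely needs the non-triviality assumption you impose.
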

    \begin{proof}
      Let $G$ be a cyclic group and $\varphi \in \operatorname{Aut}(G)$.  If $\varphi \in \operatorname{AAut(Core}(G))$ then $G$ has an exponent $3$ by proposition \ref{Thm4}. Thus the smallest common multiple of the order elements of $G$ is $3$. Since $G$ is cyclic, there exists an element $x\in G$ such that $x^{n}=1$, where $n$ is order of $G$. Hence $n$ divides $3$, so $n=3$ i.e., $G \cong \mathbb{Z}_{3}$.

    \end{proof}
\begin{corollary}
    Let $G$ be a group with a trivial center. Then,  no automorphism or antiautomorphism of $G$  induces an antiautomorphism on $Core(G)$.
\end{corollary}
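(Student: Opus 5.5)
The plan is to combine the two criteria already established for the core quandle. By Proposition \ref{Thm4}(b), an antiautomorphism $\psi$ of $G$ induces an antiautomorphism of $\operatorname{Core}(G)$ if and only if $G$ has exponent $3$. By the subsequent proposition, the same holds for an automorphism $\varphi$ of $G$. So it suffices to show that a nontrivial group with trivial center cannot have exponent $3$. The trivial group has to be set aside, since there $\operatorname{Core}(G)$ is a one-point quandle and the identity is trivially an antiautomorphism; I read the statement as concerning nontrivial $G$.

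The key step is to establish that every group of exponent $3$ has nontrivial center whenever $G\neq 1$. I would first try the classical fact that a group of exponent $3$ satisfies the identity that $x$ commutes with $y x y^{-1}$ (equivalently, conjugates of an element commute pairwise). From this one gets that $G$ is $2$-Engel and hence nilpotent of class at most $3$ (Levi–van der Waerden). A nontrivial nilpotent group has nontrivial center, so $Z(G)\neq 1$.

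To keep the argument self-contained, I would derive the commuting-conjugates identity directly. Starting from $(xy)^3=1$ and $(xy^{-1})^3=1$ together with $x^3=y^3=1$, a short manipulation gives $[x,yxy^{-1}]=1$. The passage from the $2$-Engel condition to nilpotency (in fact $[x,y,z]^3=1$ and class $\le 3$) is the main obstacle. If writing it out becomes too long, I would simply cite it as a known theorem.

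Granting this step, the rest is immediate. Suppose some automorphism or antiautomorphism of $G$ induced an antiautomorphism of $\operatorname{Core}(G)$. Then $G$ would have exponent $3$ by the cited propositions, so $Z(G)\neq 1$, contradicting the hypothesis that the center is trivial.
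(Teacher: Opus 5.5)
Your argument is correct, and its outer structure matches the paper's: Proposition~\ref{Thm4}(b) and its automorphism analogue reduce everything to showing that a nontrivial group of exponent $3$ has nontrivial center. The two proofs differ in how they justify that step.

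The paper argues ``exponent $3$ $\Rightarrow$ $3$-group $\Rightarrow$ nilpotent $\Rightarrow$ $Z(G)\neq 1$''. The middle implication holds only for finite groups. An infinite $3$-group need not be nilpotent and can even have trivial center; McLain's groups over $\mathbb{F}_3$ are examples.

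Your route uses the exponent itself rather than just the $3$-group property. From $(xy^{-1})^3=(yx)^3=y^3=1$ one gets $xy^{-1}xy=yx^{-1}y^{-1}=y^{-1}xyx$. So $x$ commutes with its conjugates, and $G$ is $2$-Engel. Levi's theorem (or Levi--van der Waerden directly for exponent $3$) then gives nilpotency of class at most $3$. This costs a citation of a genuine theorem, but the corollary then holds for arbitrary, possibly infinite, $G$. The paper's shorter argument, as written, only covers finite $G$.

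Your caveat about $G=1$ is also justified. For the trivial group the statement is false: the identity map of the one-point quandle $\operatorname{Core}(1)$ is an antiautomorphism. The paper's proof tacitly needs $G\neq 1$ at the step ``$Z(G)$ is nontrivial''.
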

\begin{proof}
    Let $G$ be a group with trivial center and let $\varphi$ be an element of $\operatorname{Aut}(G)$ or $ \operatorname{AAut}(G)$. Suppose that $\varphi \in \operatorname{AAut(Core}(G))$. Then, by proposition \ref{Thm4}, $G$ has an exponent $3$. Therefore $G$ is a $3$-group. Therefore $G$ is nilpotent. Thus $Z(G)$ is non trivial, which is a contradiction. Hence, there is no automorphism or  antiautomorphism of $G$ that induces an antiautomorphism on $\operatorname{Core}(G)$.
\end{proof}
Since the dihedral quandle $\operatorname{R_{3}}$ is commutative, every antiautomorphism of $\operatorname{R_{3}}$ is an automorphism of $\operatorname{R_{3}}$. We  prove that there are no antiautomorphisms on the dihedral quandle $\operatorname{R}_{n}$ of order $n\neq 3$. Recall that the dihedral quandle $\operatorname{R}_{n}$ of order $n$ is a quandle with the operation $a_{i}\ast a_{j}=a_{2j-i\pmod n}$ for $a_{i}, a_{j}\in\operatorname{R}_{n}$. For convenience, we will denote $a_{i\pmod n}$ by $a_{\overline{i}}$ in the following theorem.
\begin{theorem}\label{Theorem 3.4}
     Let $\operatorname{R_{n}}=\{a_{\overline{0}},\cdots a_{\overline{n-1}}\}$ be the dihedral quandle where $n\neq 3$. Then 
    $\operatorname{R_{n}}$ does not have antiautomorphisms.
\end{theorem}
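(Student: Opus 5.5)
The plan is to translate the antiautomorphism condition into a functional equation on indices modulo $n$, normalize by a translation, and conclude that every index is killed by $3$. Let $\psi$ be an antiautomorphism of $\operatorname{R}_n$, and write $\psi(a_{\overline{i}})=a_{\overline{p(i)}}$ for a bijection $p:\mathbb{Z}_n\to\mathbb{Z}_n$. The identity $\psi(a_{\overline{i}}\ast a_{\overline{j}})=\psi(a_{\overline{j}})\ast\psi(a_{\overline{i}})$ together with $a_{\overline{i}}\ast a_{\overline{j}}=a_{\overline{2j-i}}$ becomes
$$p(2j-i)=2p(i)-p(j)\pmod n\qquad\text{for all } i,j\in\mathbb{Z}_n .$$

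The first step is to normalize so that $p(0)=0$. The translation $t_c:a_{\overline{i}}\mapsto a_{\overline{i+c}}$ is an automorphism of $\operatorname{R}_n$, since $2(j+c)-(i+c)=(2j-i)+c$. Composing an antiautomorphism with an automorphism gives an antiautomorphism, because
$$t_c\psi(x\ast y)=t_c(\psi(y)\ast\psi(x))=t_c\psi(y)\ast t_c\psi(x).$$
So, replacing $\psi$ by $t_{-p(0)}\psi$, we may assume $p(0)=0$. Equivalently, one can check directly that $p-p(0)$ satisfies the same identity.

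Next I specialize the functional equation. Putting $j=0$ gives $p(-i)=2p(i)$ for all $i$. Applying this twice,
$$p(i)=p(-(-i))=2p(-i)=4p(i),$$
so $3p(i)=0$ in $\mathbb{Z}_n$ for every $i$. (Putting $i=0$ gives the companion relation $p(2j)=-p(j)$, which I will not need.) Hence the image of $p$ lies in the subgroup $\{x\in\mathbb{Z}_n : 3x=0\}$, which has $\gcd(3,n)\le 3$ elements.

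Finally I use bijectivity. If $3\nmid n$, the image of $p$ is just $\{0\}$, so $p$ is not injective as soon as $n\ge 2$. If $3\mid n$ and $n\neq 3$, then $n\ge 6$, while the image has at most $3$ elements, again contradicting injectivity. Either way no antiautomorphism exists.

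There is no real obstacle in this argument. The one point needing care is that the translation normalization is legitimate, i.e.\ that automorphism–antiautomorphism composites are antiautomorphisms, which I verified above. One caveat must be recorded: for $n=1$ the identity map is trivially an antiautomorphism of $\operatorname{R}_1$. The statement should therefore be read for $n\ge 2$, $n\neq 3$, where the argument above applies; the case $n=2$, in which $\operatorname{R}_2$ is trivial, is covered by the same reasoning.
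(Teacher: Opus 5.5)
Your proof is correct, and it takes a different route from the paper's. The paper sets $\varphi(a_{\overline 0})=a_{\overline i}$ and $\varphi(a_{\overline 1})=a_{\overline j}$. It then chases explicit products: $\varphi(a_{\overline 2})=a_{\overline{2i-j}}$, $\varphi(a_{\overline 3})=a_{\overline{3j-2i}}$, and $\varphi(a_{\overline 5})=\varphi(a_{\overline 1}\ast a_{\overline 3})=a_{\overline{2i-j}}$, so two distinct elements collide.

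Both arguments push the involutivity $(x\ast y)\ast y=x$ of $\operatorname{R}_n$ through an antiautomorphism, but on opposite sides. The paper in effect uses it in the target: $\varphi(y\ast(y\ast x))=(\varphi(x)\ast\varphi(y))\ast\varphi(y)=\varphi(x)$, with $y=a_{\overline 1}$, $x=a_{\overline 2}$, so that $y\ast(y\ast x)=a_{\overline 5}$. This gives a collision in the domain. You use it in the source: $\psi(x)=\psi(a_{\overline 0})\ast(\psi(a_{\overline 0})\ast\psi(x))$, i.e.\ $3(p(i)-p(0))=0$, which constrains the image.

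The paper's version is shorter and needs no translation normalization. However, the choice of $a_{\overline 2}, a_{\overline 5}$ looks ad hoc, and the hypothesis enters only through $a_{\overline 2}\neq a_{\overline 5}$, i.e.\ $n\nmid 3$. That condition silently excludes $n=1$ as well as $n=3$.

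Your argument uses bijectivity only at the last step, so it proves more. Any map satisfying $\psi(x\ast y)=\psi(y)\ast\psi(x)$ has image in a coset of the $3$-torsion subgroup of $\mathbb{Z}_n$, hence at most $\gcd(3,n)$ points. This explains exactly why $n=3$ is exceptional. Your caveat is also right: for $n=1$ the identity is an antiautomorphism, so the theorem as stated needs $n\ge 2$.
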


\begin{proof}
    Let $\varphi$ be an antiautomorphism of $\operatorname{R_{n}}$ such that $\varphi(a_{\overline{0}})=a_{\overline {i}}$ and $\varphi(a_{\overline{1}})=a_{\overline{j}}$.
    \\
    Since $\varphi(a_{\overline{0}}\ast a_{\overline{1}})=\varphi(a_{\overline{2}})$ and  $\varphi(a_{\overline{1}})\ast \varphi(a_{\overline{0}})=a_{\overline{j}}\ast a_{\overline{i}}=a_{\overline{2i-j}},$ we have
$\varphi(a_{\overline{2}})=a_{\overline{2i-j}}$.
\\
Likewise
      $$\varphi(a_{\overline{3}})=\varphi(a_{\overline{1}}\ast a_{\overline{2}})=\varphi(a_{\overline{2}})\ast \varphi(a_{\overline{1}})=a_{\overline{2i-j}}\ast a_{j}=a_{\overline{3j-2i}},$$
      
      $$\varphi(a_{\overline{4}})=\varphi(a_{\overline{2}}\ast a_{\overline{3}})=\varphi(a_{\overline{3}})\ast \varphi(a_{\overline{2}})=a_{\overline{3j-2i}}\ast a_{\overline{2i-j}}=a_{\overline{6i-5j}}$$ and
    
     $$\varphi(a_{\overline{5}})=\varphi(a_{\overline{1}}\ast a_{\overline{3}})=\varphi(a_{\overline{3}})\ast \varphi(a_{\overline{1}})=a_{\overline{3j-2i}}\ast a_{\overline{j}}=a_{\overline{2i-j}}.$$
    \\
    Since $a_{\overline{2}}\neq a_{\overline{5}}$  then $\varphi(a_{\overline{2}})=\varphi(a_{\overline{5}})$ is a contradiction. Therefore, there are no antiautomorphisms of
    $\operatorname{R_{n}}$.
\end{proof}
Next, we construct automorphisms of $\operatorname{Core}(G)$ and $\operatorname{(Alex} (G, \phi))$  that are not automorphisms of $G$. To perform  these constructions, we introduce the set of maps $$F=\{f_{a,b}: G\to G\,\,|\,\,a,b\in G,f_{a,b}(x)=axb, x\in G \}.$$  This set is a group under multiplication $f_{a,b}f_{c,d}=f_{ac,db}, $ where $\,a,b,c,d\in G$. We denote  the subgroup $\langle f_{a,b}\,\,|\,\,a\in \operatorname{Fix}(\phi), b\in G\rangle$ of $F$ by $F'$.
\begin{proposition}\label{pro 1}
     Let $G$ a group and $\phi \in \operatorname{Aut}(G)$. Let $N$ be a subset of $G\times G^{op}$ such that $N=\{(a,a^{-1})\,\,|\,\,a\in Z(G)\}\cong Z(G)$.
    \begin{itemize}
        \item [1)] $F$ is a subgroup of $\operatorname{Aut(Core}(G))$.
       \item [2)] $F'$ is a subgroup of $\operatorname{Aut(Alex} (G, \phi))$.
       \item[3)] $F\cong (G\times G^{op})/N$.
    \end{itemize} 
\end{proposition}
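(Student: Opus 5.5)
The plan is to verify each of the three claims directly from the definition $f_{a,b}(x)=axb$. The key fact is the composition law
$$f_{a,b}f_{c,d}(x)=a(cxd)b=f_{ac,db}(x).$$
This shows that $F$ is closed under composition, that $f_{1,1}=\mathrm{id}$, and that $f_{a,b}^{-1}=f_{a^{-1},b^{-1}}$. In particular every $f_{a,b}$ is a bijection of $G$, and $F$ is a group of permutations of $G$. So for 1) and 2) it only remains to show that the relevant maps preserve the quandle operation.

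For 1), take $x\ast y=yx^{-1}y$ and compute
$$f_{a,b}(x)\ast f_{a,b}(y)=(ayb)(axb)^{-1}(ayb)=ayb\,b^{-1}x^{-1}a^{-1}\,ayb=ayx^{-1}yb=f_{a,b}(x\ast y).$$
Hence every element of $F$ lies in $\operatorname{Aut(Core}(G))$, and $F$, being a group of such maps, is a subgroup.

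For 2), take $x\ast y=\phi(xy^{-1})y$ and $a\in\operatorname{Fix}(\phi)$. Then
$$f_{a,b}(x)\ast f_{a,b}(y)=\phi(axb\,b^{-1}y^{-1}a^{-1})\,ayb=\phi(a)\phi(xy^{-1})\phi(a)^{-1}ayb.$$
Using $\phi(a)=a$, this equals $a\phi(xy^{-1})yb=f_{a,b}(x\ast y)$. So each generator of $F'$ is an automorphism of $\operatorname{Alex}(G,\phi)$. Since $\operatorname{Aut(Alex}(G,\phi))$ is a group, the subgroup $F'$ generated by these generators is contained in it. In fact, because $\operatorname{Fix}(\phi)$ is a subgroup of $G$, the set of such $f_{a,b}$ is already closed under composition and inverses.

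For 3), I would define $\Phi:G\times G^{op}\to F$ by $\Phi(a,b)=f_{a,b}$. In $G\times G^{op}$ the product is $(a,b)(c,d)=(ac,\,b\circ d)=(ac,\,db)$, which matches the composition law above. So $\Phi$ is a homomorphism, and it is surjective by the definition of $F$. For the kernel, suppose $axb=x$ for all $x$. Putting $x=1$ gives $b=a^{-1}$, and then $axa^{-1}=x$ for all $x$ forces $a\in Z(G)$. Conversely, every pair $(a,a^{-1})$ with $a$ central lies in the kernel. Thus $\ker\Phi=N$, and $N\cong Z(G)$ via $a\mapsto(a,a^{-1})$, which is a homomorphism because $a$ is central. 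The first isomorphism theorem then gives $F\cong (G\times G^{op})/N$.

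The only delicate point is the bookkeeping of the opposite multiplication. The second coordinate composes in reverse order, which is exactly why $G^{op}$ rather than $G$ appears. Everything else is a routine computation.
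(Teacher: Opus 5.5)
Your argument is correct. For 1) and 2) it is the paper's own computation: you check that each $f_{a,b}$ (with $a\in\operatorname{Fix}(\phi)$ in the Alexander case) preserves the operation. You also justify why $F$ and $F'$ are groups of bijections, which the paper only asserts.

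For 3) you run the map in the opposite direction, and this is a genuine improvement. The paper defines $\Phi\colon F\to (G\times G^{op})/N$ by $f_{a,b}\mapsto[(a,b)]$. A map in $F$ does not determine its pair (e.g.\ $f_{a,b}=f_{az,z^{-1}b}$ for $z\in Z(G)$), so this needs a well-definedness check: that $f_{a,b}=f_{c,d}$ forces $(c,d)\in(a,b)N$. The paper skips this check. Its kernel argument only uses the easy implication that pairs in $N$ give the identity map. Your surjection $(a,b)\mapsto f_{a,b}$ from $G\times G^{op}$ puts the real content (that $axb=x$ for all $x$ forces $b=a^{-1}\in Z(G)$) exactly into the kernel computation. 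You also get normality of $N$ for free from the first isomorphism theorem.

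One minor correction: $a\mapsto(a,a^{-1})$ is a homomorphism $G\to G\times G^{op}$ for every $a\in G$. In $G\times G^{op}$ we have $(a,a^{-1})(c,c^{-1})=(ac,c^{-1}a^{-1})=(ac,(ac)^{-1})$. Centrality is what puts $(a,a^{-1})$ in the kernel, not what makes this map multiplicative.
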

\begin{proof} 
1) It is enough to prove that $f_{a,b} \in \operatorname{Aut(Core}(G))$. For all $x,y \in G$, we have 
\begin{eqnarray*}
  f_{a,b}(x)\ast f_{a,b}(y)=(axb)\ast(ayb)&=&aybb^{-1}x^{-1}a^{-1}ayb\\&=&ayx^{-1}yb=f_{a,b}(x\ast y).
\end{eqnarray*}
Hence $F$ is a subgroup of $\operatorname{Aut(Core}(G))$.
\begin{itemize}
\item [2)] We will prove that  $f_{a,b} \in \operatorname{Aut(Alex} (G, \phi))$ where $a\in \operatorname{Fix}(\phi)$. For all $x,y \in G$, we have :
\begin{eqnarray*}
    f_{a,b}(x)\ast f_{a,b}(y)=(axb)\ast(ayb)&=&\phi(axbb^{-1}y^{-1}a^{-1})ayb\\&=&a\phi(xy^{-1})a^{-1}ayb=a\phi(xy^{-1})yb=f_{a,b}(x\ast y)
    \end{eqnarray*}
    Hence $f_{a,b}\in \operatorname{Aut(Alex} (G, \phi))$.
\item[3)] For all $(a,b)\in G\times G^{op}$, we denote by $[(a,b)]$ the equivalence class of $(a,b)$ in $(G\times G^{op})/N$.\\
    Let $\Phi :F \to (G\times G^{op})/N $ such that $\Phi(f_{a,b})=[(a,b)],\, a,b\in G$.\\ For all $a_{1}, b_{1},a_{2},b_{2} \in G$, we have : $$\Phi(f_{a_{1},b_{1}}f_{a_{2},b_{2}})=\Phi(f_{a_{1}a_{2},b_{2}b_{1}})=[(a_{1}a_{2},b_{2}b_{1})]$$  and 
    $$\Phi(f_{a_{1},b_{1}})\Phi(f_{a_{2},b_{2}})=[(a_{1},b_{1})][(a_{2},b_{2})]=[(a_{1}a_{2},b_{2}b_{1})].$$ If $f_{a,b}\in \operatorname{Ker}(\Phi)$, then $\Phi(f_{a,b})=[(a,b)]=[(1,1)]$. Thus there exists $c\in Z(G)$ such that $(a,b)=(c,c^{-1})$ i.e. $b=a^{-1},\,a\in Z(G).$ Hence $f_{a,b}(x)=f_{a,a^{-1}}(x)=x$ for all $x\in G$ and $\operatorname{Ker}(\Phi)=\{\operatorname{id}\}$, where $\operatorname{id}$ is the identity map. Additionally, for all $[(a,b)]\in (G\times G^{op})/N$, we have $a,b \in G$ and $f_{a,b}\in F$. Thus $\Phi(f_{a,b})=[(a,b)]$ and $[(a,b)]\in \operatorname{Im}\Phi$. Hence $\operatorname{Im}\Phi=(G\times G^{op})/N.$ This completes the proof.
\end{itemize}
\end{proof}
\begin{proposition}\label{proposition 2.10}
    Let $G$ be a group and let $N$ be a set such that \\$N=\{(a,a^{-1})\,\,|\,\,a\in Z(G)\}\cong Z(G)$. Then $((G\times G^{op})/N)\rtimes\operatorname{Out}(G)\leq \operatorname{Aut(Core} (G)).$
\end{proposition}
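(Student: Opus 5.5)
The plan is to realize the group in the statement as the subgroup of $\operatorname{Aut(Core}(G))$ generated by $F$ and $\operatorname{Aut}(G)$. First, by Proposition \ref{pro 1}, part 1), $F\leq \operatorname{Aut(Core}(G))$, and by part 3) $F\cong (G\times G^{op})/N$. Second, every $\varphi\in\operatorname{Aut}(G)$ is an automorphism of $\operatorname{Core}(G)$, since $\varphi(yx^{-1}y)=\varphi(y)\varphi(x)^{-1}\varphi(y)$. So it suffices to study $K=\langle F,\operatorname{Aut}(G)\rangle\leq\operatorname{Aut(Core}(G))$ and to exhibit $F\rtimes\operatorname{Out}(G)$ inside $K$.

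Next I verify three structural facts. The first is normality: for $\varphi\in\operatorname{Aut}(G)$ one has $\varphi f_{a,b}\varphi^{-1}(x)=\varphi(a)\,x\,\varphi(b)$, so $\varphi f_{a,b}\varphi^{-1}=f_{\varphi(a),\varphi(b)}\in F$. Hence $F\trianglelefteq K$ and $K=F\cdot\operatorname{Aut}(G)$. The second is the intersection: if $f_{a,b}\in\operatorname{Aut}(G)$, then $f_{a,b}(1)=ab=1$, so $b=a^{-1}$ and $f_{a,a^{-1}}$ is the inner automorphism determined by $a$. Conversely, every inner automorphism has this form, so $F\cap\operatorname{Aut}(G)=\operatorname{Inn}(G)$. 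The third is the quotient: $K/F\cong\operatorname{Aut}(G)/(F\cap\operatorname{Aut}(G))=\operatorname{Aut}(G)/\operatorname{Inn}(G)=\operatorname{Out}(G)$. Combining these, $K$ contains the normal subgroup $F\cong (G\times G^{op})/N$ with quotient $\operatorname{Out}(G)$, and the action of $\operatorname{Out}(G)$ is induced by $[\varphi]\cdot[(a,b)]=[(\varphi(a),\varphi(b))]$.

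The remaining step, and the main obstacle, is to turn this extension into the semidirect product $((G\times G^{op})/N)\rtimes\operatorname{Out}(G)$ as stated. Changing $\varphi$ by an inner automorphism $f_{c,c^{-1}}$ changes its action on $F$ only by conjugation inside $F$. What I would try first is to form the abstract semidirect product directly. I would fix a set-theoretic transversal $\{\varphi_{t}\}$ of $\operatorname{Inn}(G)$ in $\operatorname{Aut}(G)$ and map $([(a,b)],t)\mapsto f_{a,b}\varphi_{t}$. I would then check that this map is a bijection onto $K$, using $F\cap\operatorname{Aut}(G)=\operatorname{Inn}(G)$ and the absorption of the inner part into $F$ via $f_{a,b}f_{c,c^{-1}}=f_{ac,c^{-1}b}$. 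Finally I would show that it is multiplicative for the semidirect product law $(f,\varphi_{s})(f',\varphi_{t})=(f\,\varphi_{s}f'\varphi_{s}^{-1},\varphi_{s}\varphi_{t})$.

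Multiplicativity holds exactly when the transversal can be chosen closed under composition, that is, when it forms a complement to $\operatorname{Inn}(G)$. This is where I expect the real difficulty. In the centerless case relevant to the introduction, $N$ is trivial and $F\cong G\times G^{op}$. There I would try to show that any discrepancy $\varphi_{s}\varphi_{t}\varphi_{st}^{-1}\in\operatorname{Inn}(G)\subseteq F$ can be absorbed into the $F$-coordinate so as to give a genuine embedding. If that absorption fails, the conclusion obtained is the extension form $K/F\cong\operatorname{Out}(G)$ with $K\leq\operatorname{Aut(Core}(G))$ and $F\cong (G\times G^{op})/N$ normal. That is a possibly non-split extension rather than the unconditional semidirect product the statement asserts.
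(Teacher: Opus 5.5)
Your three structural steps are correct, and they are a cleaned-up version of the paper's argument. The paper uses the same conjugation formula and the fact that automorphisms fix $1$ to get that $F$ is normal and that $F\cap\operatorname{Aut}(G)=\operatorname{Inn}(G)$.

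The gap is the one you flag yourself. These steps give only an extension $1\to F\to K\to\operatorname{Out}(G)\to 1$ with $K=\langle F,\operatorname{Aut}(G)\rangle$, and a semidirect product needs a complement to $F$ in $K$. The paper does not supply one either. It passes from ``the intersection of $\operatorname{Out}(G)$ and $F$ is $\operatorname{Inn}(G)$'' straight to $F\rtimes\operatorname{Out}(G)$. That step fails for three reasons:
\begin{enumerate}
\item $\operatorname{Out}(G)$ is not a subgroup of $\operatorname{Aut}(\operatorname{Core}(G))$.
\item An internal semidirect decomposition needs trivial intersection.
\item As you note, inner automorphisms act on $F$ by the non-trivial conjugations by $f_{c,c^{-1}}$, so there is no canonical action of $\operatorname{Out}(G)$ on $F$ in the first place.
\end{enumerate}

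The absorption you hope for in the centerless case cannot work in general. Suppose $Z(G)=1$ and write $\iota_c(x)=cxc^{-1}$.
\begin{itemize}
\item The subgroups $L=\{f_{a,1}\}\cong G$ and $R=\{f_{1,b}\}\cong G^{op}$ are normal in $K$.
\item The element $k=f_{c,d}\varphi$ acts on them by $a\mapsto\iota_c\varphi(a)$ and $b\mapsto\iota_{d^{-1}}\varphi(b)$.
\item The resulting homomorphism $K\to\operatorname{Aut}(G)\times\operatorname{Aut}(G)$ is injective: its kernel forces $\varphi=\iota_{c^{-1}}$ and $d=c^{-1}$, i.e.\ $k=\mathrm{id}$.
\item Its image is the fibre product $\{(\alpha,\beta):\alpha\beta^{-1}\in\operatorname{Inn}(G)\}$, and it sends $F$ onto $\operatorname{Inn}(G)\times\operatorname{Inn}(G)$.
\end{itemize}
Projecting a complement of $F$ to the first coordinate gives a complement of $\operatorname{Inn}(G)$ in $\operatorname{Aut}(G)$. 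Conversely, the diagonal copy of such a complement complements $F$. So $K$ splits over $F$ if and only if $\operatorname{Aut}(G)$ splits over $\operatorname{Inn}(G)$.

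This fails for $G=A_6$. Here $\operatorname{Aut}(A_6)/A_6\cong C_2\times C_2$, so a complement would need an involution in each non-trivial coset. The coset $M_{10}\setminus A_6$ contains no involutions.

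Hence neither your construction nor the paper's proves the proposition as intended. What the argument actually yields is:
\begin{itemize}
\item the extension above; and
\item a genuine subgroup $F\rtimes D\leq\operatorname{Aut}(\operatorname{Core}(G))$ whenever $D$ is a complement of $\operatorname{Inn}(G)$ in $\operatorname{Aut}(G)$. Here $F\cap D\subseteq\operatorname{Inn}(G)\cap D=1$ and $FD=F\operatorname{Aut}(G)=K$.
\end{itemize}
The statement leaves the action unspecified, so some abstract $F\rtimes\operatorname{Out}(G)$ might still lie in $\operatorname{Aut}(\operatorname{Core}(G))$ outside $K$. For $A_6$ this happens: take inversion $x\mapsto x^{-1}$ together with an outer involution. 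But that requires a different idea from a complement to $F$ in $K$.
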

\begin{proof}
    It is enough to prove that $F\rtimes\operatorname{Out}(G) \leq \operatorname{Aut(Core} (G)).$ We have $\langle \operatorname{Out}(G), F\rangle \leq \operatorname{Aut(Core} (G))$ by proposition \ref{pro 1}.  Let $a,b\in G$ and  $\varphi \in \operatorname{Out}(G)$. For $x\in G$, we have  $$\varphi^{-1}f_{a,b}\varphi(x)=\varphi^{-1}(a)x\varphi^{-1}(b)=f_{\varphi^{-1}(a),\varphi^{-1}(b)}(x).$$ Thus, $F$ is the normal subgroup  of the group $\langle\operatorname{Out}(G), F\rangle$. Since every automorphism $\varphi$ of $G$ satisfies $\varphi(1)=1$, the intersection of $\operatorname{Out}(G)$ and $F$ is  the group of inner automorphisms of $G$. Hence $F\rtimes\operatorname{Out}(G)\leq \operatorname{Aut(Core} (G)).$
\end{proof}
We present a theorem similar to \cite[Theorem.$4$.$2$]{BDS} for an abelian group without $2$-torsion.
\begin{theorem}
    Let $G$ be an abelian group without $2$-torsion. Then $\operatorname{Aut(Core} (G))\cong G \rtimes \operatorname{Aut}(G)$.
\end{theorem}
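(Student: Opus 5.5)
Write $G$ additively, so $\operatorname{Core}(G)$ has operation $x\ast y=2y-x$. The plan is to show that every quandle automorphism $f$ of $\operatorname{Core}(G)$ is affine, i.e.\ $f(x)=a+\varphi(x)$ with $a\in G$ and $\varphi\in\operatorname{Aut}(G)$. We then identify the resulting group of affine maps with $G\rtimes\operatorname{Aut}(G)$.

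First, the easy inclusion. Every translation $t_a(x)=x+a$ is an automorphism of $\operatorname{Core}(G)$; this is the case $f_{a,0}$ of Proposition \ref{pro 1}. Every $\varphi\in\operatorname{Aut}(G)$ is also an automorphism, since $\varphi(2y-x)=2\varphi(y)-\varphi(x)$. As in Proposition \ref{T1}, we have $\varphi t_a\varphi^{-1}=t_{\varphi(a)}$, and the translations meet $\operatorname{Aut}(G)$ trivially because automorphisms fix $0$. Hence $\langle T,\operatorname{Aut}(G)\rangle\cong G\rtimes\operatorname{Aut}(G)$, where $T\cong G$ is the translation subgroup. Here $G$ embeds faithfully, since for abelian $G$ the set $N$ of Proposition \ref{pro 1} identifies $f_{a,b}$ with $t_{a+b}$.

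For the reverse inclusion, let $f\in\operatorname{Aut}(\operatorname{Core}(G))$ and set $\varphi=t_{-f(0)}\circ f$. Then $\varphi$ is a quandle automorphism with $\varphi(0)=0$, and it suffices to show that $\varphi$ is additive. From $\varphi(0\ast y)=\varphi(0)\ast\varphi(y)$ we get $\varphi(2y)=2\varphi(y)$. From $\varphi(x\ast 0)=\varphi(x)\ast 0$ we get $\varphi(-x)=-\varphi(x)$. The general identity then reads
\[
\varphi(2y-x)=2\varphi(y)-\varphi(x).
\]
Substituting $-x$ for $x$ gives $\varphi(2y+x)=2\varphi(y)+\varphi(x)=\varphi(2y)+\varphi(x)$. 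Therefore $\varphi(u+x)=\varphi(u)+\varphi(x)$ for all $u\in 2G$ and all $x\in G$.

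The main obstacle is passing from additivity on $2G$ to additivity on all of $G$. The hypothesis of no $2$-torsion only gives injectivity of doubling, not $2G=G$; for example, $G=\mathbb{Z}$ has $2G\neq G$. The approach I would try first is to double. Since $\varphi(2z)=2\varphi(z)$ and $2x+2y\in 2G$, we compute
\[
2\varphi(x+y)=\varphi(2x+2y)=\varphi(2x)+\varphi(2y)=2\varphi(x)+2\varphi(y).
\]
The middle step uses additivity with $u=2x\in 2G$. Because $G$ has no $2$-torsion, we may cancel the factor $2$ to get $\varphi(x+y)=\varphi(x)+\varphi(y)$. So $\varphi$ is an additive bijection, hence $\varphi\in\operatorname{Aut}(G)$, and $f=t_{f(0)}\varphi$ lies in $G\rtimes\operatorname{Aut}(G)$.

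Finally, the decomposition $f=t_{f(0)}\varphi$ is unique, because $t_a\varphi=t_b\psi$ forces $a=b$ (evaluate at $0$) and then $\varphi=\psi$. Together with the first step, this gives $\operatorname{Aut}(\operatorname{Core}(G))\cong G\rtimes\operatorname{Aut}(G)$.
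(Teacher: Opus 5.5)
Your proof is correct and follows essentially the same route as the paper: embed $G\rtimes\Aut(G)$ as translations composed with group automorphisms, then for an arbitrary $f$ show that $x\mapsto f(x)-f(0)$ is additive by proving that twice the additivity defect vanishes and cancelling using the absence of $2$-torsion. The difference is only cosmetic. You normalize to $\varphi(0)=0$ first and extract $\varphi(2y)=2\varphi(y)$ and $\varphi(-x)=-\varphi(x)$, whereas the paper derives $f(xy)^2=f(x)^2f(y)^2f(e)^{-2}$ directly by multiplying two expressions for $f(xy)$.
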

\begin{proof}
    Let $e$ be the neutral element of $G$ and  consider the map
    $\Phi:  G \rtimes \operatorname{Aut}(G) \to \operatorname{Aut(Core }(G))$ defined by $\Phi((a,h))=t_{a}h$ where $t_{a}:G \to G$ given by $t_{a}(b)=ba$ for all $b\in G$ . By the proof of \cite[Proposition $4$. $1$]{BDS}, $\Phi$ is an homomorphism.
\\
   We prove that $\Phi$ is injective. $$\Phi(a_{1},h_{1})=\Phi(a_{2},h_{2})\,\,\text{i.e.}\,\,t_{a_{1}}h_{1}=t_{a_{2}}h_{2}.$$ Thus $t_{a_{1}}h_{1}(e)=t_{a_{2}}h_{2}(e)$ which equivalent to $a_{1}e=a_{2}e$ i.e. $a_{1}=a_{2}$.
 Hence  $h_{1}=h_{2}$ i.e. $(a_{1},h_{1})=(a_{2},h_{2})$.
\\
 Now, we prove that $\Phi$ is surjective. Let $f\in \operatorname{Aut(Core }(G))$. Then for all $x,y \in G$, we have $$f(x\ast y)=f(x)\ast f(y)\,\,\text{i.e.}\,\, f(xy^{-1}x)=f(x)f(y)^{-1}f(x).$$ Define $h: G \to G$ by $h(x)=f(x)f(e)^{-1}$. Since $f$ is a bijection, to show that $h\in \operatorname{Aut}(G)$, it is enough to show that $f(xy)=f(x)f(y)f(e)^{-1}$.\\
The left hand side
    \begin{eqnarray*}
       f(xy)&=&f(x(xy^{-1})^{-1}x)\\&=&f(x)f(xy^{-1})^{-1}f(x)=f(x)^{2}f(xy^{-1})^{-1}. \,\,\,\, (1)
    \end{eqnarray*}
    On the other hand side we have
  $f(xy)=f(yx)=f(y)^{2}f(yx^{-1})^{-1}. \,\,\,\, (2)$\\
  Since
    \begin{eqnarray*}
f(xy^{-1})&=&f(e(yx^{-1})^{-1}e)\\&=&f(e)f(yx^{-1})^{-1}f(e)=f(e)^{2}f(yx^{-1})^{-1}.
    \end{eqnarray*}
   Multiplying $(1)$ and $(2)$ and replacing  $f(xy^{-1})$ yields
$f(xy)^{2}=f(x)^2f(y)^2f(e)^{-2}$, i.e. $$[f(xy)(f(x)f(y)f(e)^{-1})^{-1}]^{2}=e.$$Since $G$ has no $2$-torsion, we have $$f(xy)(f(x)f(y)f(e)^{-1})^{-1}=e ,\,\,\text{ i.e.}\,\, f(xy)=f(x)f(y)f(e)^{-1}.$$ Hence $h\in \operatorname{Aut}(G)$. Since $h=f(e)^{-1}f$ is equivalent to
     $f=f(e)h\in \Phi( G \rtimes \operatorname{Aut}(G)),\,\,\Phi$ is surjective.
\end{proof}

%%%%%%%%%%%%%%%%%%%%%%%%%%%%%%%%%%%%%%%%%%%%%%%%%%%%%%%%%%%%%%%%%%%%%%%%%%%%%%%%%%%%%%%%%%%%%%%%%%%%%%%%%%%%%%%%%

\section{Automorphisms and antiautomorphisms of some analogous of  $\Alex(G,\phi)$} \label{s4}

For the quandles defined in section \ref{sec-prelim}, we  study the following question: what are the connections between  $\operatorname{Aut}(G)$, $\operatorname{AAut}(G)$, $\operatorname{Aut}(Q_{i})$ and $\operatorname{AAut}(Q_{i})$?

\begin{theorem}\label{theorem 6} Let $G$ be a group and $\varphi\in C_{\operatorname{Aut}(G)}(\phi)$. Then
\begin{itemize}
    \item [1)] $C_{\operatorname{Aut}(G)}(\phi)\leq\operatorname{Aut}(Q_{i})$ for $i=1,2,3,4$.
    \item [2)] $\varphi\in \operatorname{AAut}(Q_{i})$ if and only if $G$ is abelian, for $i=1,2$,
    \item [3)] $\varphi\in \operatorname{AAut}(Q_{i})$ if and only if $[G,G]$ is a central subgroup with exponent $3$ of $G$, for $i=3,4$.
    \end{itemize}
\end{theorem}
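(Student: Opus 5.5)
The plan is to verify every item by direct computation from the defining formulas of $\ast_i$, as in Theorem \ref{theorem 2.4} and Proposition \ref{Thm4}. I first fix the reading of the hypothesis. For $i=1$ the structure map of $Q_i$ is the automorphism $\phi$. For $i=2,3,4$ it is the antiautomorphism $\psi$, so I read $C_{\operatorname{Aut}(G)}(\phi)$ there as the automorphisms commuting with $\psi$. Write $\theta$ for a fixed element of this centralizer (the statement calls it $\varphi$).

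\emph{Part 1).} Here I expand both sides and move $\theta$ past the structure map.
\begin{itemize}
\item[-] For $i=1$: $\theta(x\ast_1 y)=\theta(y)\,\theta\phi(y^{-1}x)=\theta(y)\,\phi(\theta(y)^{-1}\theta(x))=\theta(x)\ast_1\theta(y)$.
\item[-] For $i=2,3,4$ the same one-line computation works, using $\theta\psi=\psi\theta$ and the fact that $\theta$ respects products and inverses.
\end{itemize}
Since the centralizer is a group of bijections, this gives $C\le \operatorname{Aut}(Q_i)$.

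\emph{Part 2).} For $i=1$ the condition $\theta(x\ast_1y)=\theta(y)\ast_1\theta(x)$ reads
$$\theta(y)\,\phi\theta(y^{-1}x)=\theta(x)\,\phi\theta(x^{-1}y).$$
Applying $\theta^{-1}$ and using commutation, it becomes the $\theta$-free identity
$$y\,\phi(y^{-1}x)=x\,\phi(x^{-1}y)\quad\text{for all } x,y\in G.$$
I then specialise, as in part b) of Theorem \ref{theorem 2.4}.
\begin{itemize}
\item[-] Setting $y=1$ gives $\phi(x)=x\phi(x)^{-1}$, that is, $\phi(x)^2=x$.
\item[-] Setting $y=x^{-1}$ gives $x^{-1}\phi(x^{-2})=x\phi(x^{-2})$.
\end{itemize}
From $\phi(a)^2=a$ with $\phi$ a homomorphism, I compare $\phi(ab)^2=ab$ with $\phi(a)\phi(b)\phi(a)\phi(b)$. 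This yields $\phi(a)\phi(b)\phi(a)\phi(b)=\phi(a)^2\phi(b)^2$. Hence $\phi(a)$ and $\phi(b)$ commute, and since $\phi$ is bijective, $G$ is abelian.

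For $i=2$ I run the same argument with $\psi(yx^{-1})=\psi(x^{-1})\psi(y)$. Here the specialisation $y=1$ produces a map $x\mapsto \psi(\cdot)$-expression that is simultaneously multiplicative and antimultiplicative, which again forces commutativity. For the converse direction I substitute the abelian identities back into the displayed identity. I would check carefully whether extra conditions (like $\phi(x)^2=x$) are needed there, and adjust the statement of the converse accordingly if so.

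\emph{Part 3), $i=3,4$: the main obstacle.} Write $\psi=\alpha\varepsilon$ with $\alpha\in\operatorname{Aut}(G)$. The side condition $x\psi(y)x^{-1}=\psi(xyx^{-1})$ becomes $x\alpha(y)^{-1}x^{-1}=\alpha(x)\alpha(y)^{-1}\alpha(x)^{-1}$. Hence $x^{-1}\alpha(x)\in Z(G)$ for all $x$, so $\alpha$ is central.

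Using this, for $i=3$ I rewrite $\psi(xy^{-1})y=\alpha(y)\alpha(x)^{-1}y$ as $(y x^{-1}y)\cdot z$ with a central correction $z$. The antiautomorphism identity for $\theta$ should then collapse, after cancelling the central factors (and using that $\theta$ commutes with $\alpha$), to
$$y x^{-1}y=x y^{-1}x\quad\text{modulo central terms}.$$
This is exactly the computation of part b) of Proposition \ref{Thm4}, which gave $(x^{-1}y)^3=1$. Here it gives $(x^{-1}y)^3$ expressed through central elements. Specialising $x=1$ and comparing should show that commutators are central and that cubes of commutators vanish, i.e.\ $[G,G]\le Z(G)$ has exponent $3$. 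Conversely, in a group of nilpotency class $2$ with commutators of exponent $3$, the commutator calculus $[a,b]^3=[a^3,b]=1$ lets one verify the identity directly. Case $i=4$ is symmetric.

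Keeping track of the central corrections in this last step is where I expect most of the work.
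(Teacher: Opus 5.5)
Your part 1) and the necessity halves of part 2) are correct and follow the paper's route. You expand both sides, move $\theta$ past the structure map, set one variable equal to $1$, and observe that the resulting squaring-type map would have to be an (anti)automorphism. (Your second specialisation is miscomputed: $y=x^{-1}$ gives $x^{-1}\phi(x^{2})=x\phi(x^{-2})$. It is never used, so this is harmless.)

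The genuine gap is everything else. The ``if'' directions of 2) and 3), and the centrality claim in 3), are false, so the steps you sketch for them cannot be completed. Your own $\theta$-free reduction already shows why. By 1), $\theta(y)\ast_i\theta(x)=\theta(y\ast_i x)$, so $\theta\in\operatorname{AAut}(Q_i)$ if and only if $Q_i$ is commutative. Putting one variable equal to $1$ then gives criteria that are in fact necessary and sufficient: $\phi(x)^2=x$ for $i=1$, $\psi(x)^2=x^{-1}$ for $i=2$, and $\psi(x)^2=x$ for $i=3,4$. Abelianness of $G$ does not imply the first two. For nontrivial abelian $G$ with $\phi=\mathrm{id}$, resp.\ $\psi=\varepsilon$, the quandle $Q_1$, resp.\ $Q_2$, is trivial. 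A trivial quandle with two or more elements has no antiautomorphism, since $\theta(x)=\theta(x\ast y)=\theta(y)\ast\theta(x)=\theta(y)$. So the ``adjustment'' of the converse that you anticipate is unavoidable.

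For 3), take $\psi=\varepsilon$, which satisfies the side condition. Then $x\ast_3y=x\ast_4y=yx^{-1}y$, so $Q_3=Q_4=\Core(G)$. Here $\theta=\mathrm{id}$ is an antiautomorphism iff $yx^{-1}y=xy^{-1}x$ for all $x,y$, i.e.\ iff $G$ has exponent $3$. Hence $G=\mathbb{Z}_5$ refutes the ``if'' direction: $[G,G]=1$, yet $\R_5$ is not commutative. Any group of exponent $3$ and nilpotency class $3$, such as the free Burnside group $B(3,3)$ of order $3^7$, refutes the ``only if'' direction.

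Concretely, your step ``comparing should show that commutators are central'' fails. Write $\psi(x)=c(x)x^{-1}$ with $c(x)=\psi(x)x$. The side condition makes $c\colon G\to Z(G)$ a homomorphism, and the specialisation $x=1$ gives exactly $y^3=c(y)^2$, so $y\mapsto y^3$ is a homomorphism into $Z(G)$. This yields $[x,y]^3=x^{-3}y^{-3}x^3y^3=1$, but says nothing about $[[G,G],G]$.

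Conversely, $\psi(y)^2=y$ is already sufficient: the commutativity identity becomes $yx^{-1}y=xy^{-1}x\cdot(x^{-1}y)^3$, which holds in every group. Your class-$2$ commutator calculus cannot supply this condition, because it is a condition on $\psi$ and not on $G$. The paper's proof has the same shortfall: it only argues necessity, and in 3) it only reaches $[a,b]^3=1$.
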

\begin{proof}  
 \begin{itemize}
        \item [1)] We  prove that $\varphi \in\operatorname{Aut}(Q_{i}(G))$ for $i=1,2,3,4$.
    \end{itemize}
    \begin{itemize}
       \item [-] Let $i=1$,
        for all  $ x,y \in G$, we have $\varphi(x\ast_{1} y)=\varphi(y\phi(y^{-1}x))=\varphi(y)\varphi\phi(y^{-1}x)$ and
        \begin{eqnarray*}
            \varphi(x)\ast_{1} \varphi(y)=\varphi(y)\phi(\varphi(y)^{-1}\varphi(x))=\varphi(y)\phi(\varphi(y^{-1})\varphi(x))&=&\varphi(y)\phi(\varphi(y^{-1}x))\\&=&\varphi(y)\phi\varphi(y^{-1}x)\\&=& \varphi(x\ast_{1} y).
        \end{eqnarray*}
        Hence $\varphi \in \operatorname{Aut}(Q_{1}(G))$ i.e. $C_{\operatorname{Aut}(G)}(\phi)\leq \operatorname{Aut}(Q_{1}(G))$.
        \item [-] Let $i=2$,
for all $x,y \in G$, we have $\varphi(x\ast_{2} y)=\varphi(y\phi(yx^{-1}))=\varphi(y)\varphi\phi(yx^{-1})$ and

           $$ \varphi(x)\ast_{2}  \varphi(y)=\varphi(y)\phi(\varphi(y)\varphi(x)^{-1})=\varphi(y)\phi\varphi(yx^{-1}).$$

        Hence $\varphi \in \operatorname{Aut}(Q_{2}(G))$ i.e. $C_{\operatorname{Aut}(G)}(\phi)\leq \operatorname{Aut}(Q_{2}(G))$.
        \item [-] Let  $i=3$, for all  $x,y \in G$, we have $\varphi(x\ast_{3} y)=\varphi(\phi(xy^{-1})y)=\varphi\phi(xy^{-1})\varphi(y)$ and

            $$ \varphi(x)\ast_{3} \varphi(y)=\phi(\varphi(x)\varphi(y)^{-1})\varphi(y)=\phi\varphi(xy^{-1})\varphi(y).$$
        Hence $\varphi \in \operatorname{Aut}(Q_{3}(G))$ i.e. $C_{\operatorname{Aut}(G)}(\phi)\leq \operatorname{Aut}(Q_{3}(G))$.
        \item [-] Let $i=4$,
for all  $x,y \in G$, we have $\varphi(x\ast_{4} y)=\varphi(y\phi(y^{-1}x))=\varphi(y)\varphi\phi(y^{-1}x)$ and

           $$ \varphi(x)\ast_{4}  \varphi(y)=\varphi(y)\phi(\varphi(y)^{-1}\varphi(x))=\varphi(y)\phi\varphi(y^{-1}x).$$

        Hence $\varphi \in \operatorname{Aut}(Q_{4}(G))$ i.e. $C_{\operatorname{Aut}(G)}(\phi)\leq \operatorname{Aut}(Q_{4}(G))$.
    \end{itemize}
  Throughout the
 proof of $2)$ and $3)$, suppose that $f=\varphi\phi=\phi\varphi$.
    \begin{itemize}
     \item[2)] We  prove that $\varphi \in\operatorname{AAut}(Q_{i}(G))$  if and only if $G$ is abelian, for $i=1,2$.
     \end{itemize}
      \begin{itemize}
       \item [-] Let $i=1$,
    for all $x,y \in G$, we have     $\varphi(x\ast_{1} y)=\varphi(y\phi(y^{-1}x))=\varphi(y)\varphi\phi(y^{-1}x)$ and
        \begin{eqnarray*}
            \varphi(y)\ast_{1} \varphi(x)=\varphi(x)\phi(\varphi(x)^{-1}\varphi(y))&=&\varphi(x)\phi(\varphi(x^{-1}y))\\&=&\varphi(x)\phi\varphi(x^{-1}y).
        \end{eqnarray*}
The equality  $$\varphi(x\ast_{1} y)=\varphi(y)\ast_{1}  \varphi(x)$$ holds if and only if $$\varphi(y)\varphi\phi(y^{-1}x)=\varphi(x)\phi\varphi(x^{-1}y)$$ i.e.
\\
$\varphi(y)f(y^{-1}x)=\varphi(x)f(x^{-1}y)$. Acting of $f^{-1}$ on both sides, we obtain $$f^{-1}\varphi(y)y^{-1}x= f^{-1}\varphi(x)x^{-1}y \,\,\text{i.e.}\,\, f^{-1}\varphi(x^{-1}y)=(x^{-1}y)^{2}.$$ If $x = 1$, then $f^{-1}\varphi(y)=y^{2}$. Therefore, $G$  is abelian.
        \item [-] Let $i=2$, for
for all  $x,y \in G$, we have $\varphi(x\ast_{2} y)=\varphi(y\phi(yx^{-1}))=\varphi(y)\varphi\phi(yx^{-1})$ and
        \begin{eqnarray*}
            \varphi(y)\ast_{2}  \varphi(x)=\varphi(x)\phi(\varphi(x)\varphi(y)^{-1})&=&\varphi(x)\phi(\varphi(xy^{-1}))\\&=&\varphi(x)\phi\varphi(xy^{-1}).
        \end{eqnarray*}
The equality $$\varphi(x\ast_{2} y)=\varphi(y)\ast_{2} \varphi(x)$$ holds if and only if $$\varphi(y)\varphi\phi(yx^{-1})=\varphi(x)\phi\varphi(xy^{-1}).$$
     By  action of $f^{-1}$ on both sides, we obtain
$$f^{-1}\varphi(x^{-1}y)=(xy^{-1})^{2}.$$ Taking $x = 1$ yields $f^{-1}\varphi(y)=(y^{-1})^{2}$. Since $f^{-1}\varphi$ is a antiautomorphism, we have for all $a,b\in G$ : $$(ab)^{-1}(ab)^{-1}=b^{-1}b^{-1}a^{-1}a^{-1}$$ i.e. $$b^{-1}a^{-1}b^{-1}a^{-1}=b^{-1}b^{-1}a^{-1}a^{-1}.$$ Hence $a^{-1}b^{-1}=b^{-1}a^{-1}$ i.e. $G$ is abelian.
        \end{itemize}
        \begin{itemize}
     \item[3)] We  prove that $\varphi \in\operatorname{AAut}(Q_{i})$  if and only if $[G,G]$ is a central subgroup with exponent $3$ of $G$, for $i=3,4$.
     \end{itemize}
     \begin{itemize}
         \item [-] Let $i=3$, 
        for all $x,y \in G,$ we have $\varphi(x\ast_{3} y)=\varphi(\phi(xy^{-1})y)=\varphi\phi(xy^{-1})\varphi(y)$ and
        $$ \varphi(y)\ast_{3} \varphi(x)=\phi(\varphi(y)\varphi(x)^{-1})\varphi(x)=\phi\varphi(yx^{-1})\varphi(x).$$
The equality  $$\varphi(x\ast_{3} y)= \varphi(y)\ast_{3} \varphi(x)$$ holds if and only if $$ f^{-1}\varphi(yx^{-1})=(yx^{-1})^{2},$$ where $f=\varphi\phi=\phi\varphi$. Since $f^{-1}\varphi$ is a antiautomorphism, we have $(ab)^{2}=b^{2}a^{2},\,\, a,b \in G$ i.e. $abab=b^{2}a^{2}$. Thus by action of $b^{-1}$ on the left and $a^{-1}$ on the left , we obtain $$a^{-1}b^{-1}abab=a^{-1}ba^{2}$$ i.e. $$[a,b]ab=b(b^{-1}a^{-1}ba)a=b[b,a]a.$$ Since $[b,a]=[a,b]^{-1}$, we have $[a,b]^{2}ab=ba$ which equivalent to $[a,b]^{2}=b^{-1}a^{-1}ba$ i.e. $[a,b]^{3}=1$.
\\
The proof for $Q_4$ is similar.
     \end{itemize}
\end{proof}
Analogously, the following theorem hold.
\begin{theorem}
Let $G$ be a group and $\psi\in C_{\operatorname{AAut}(G)}(\phi)$. Then  
\begin{itemize}
    \item [1)] $\psi\in \operatorname{AAut}(Q_{i})$ if and only if $G$ is abelian, for $i=1,2$,
    \item [2)] $\psi\in \operatorname{AAut}(Q_{i})$ if and only if $[G,G]$ is a central subgroup with exponent $3$ of $G$, for $i=3,4$.
    \end{itemize}
\end{theorem}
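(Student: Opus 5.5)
The plan is to follow the computation in the proof of Theorem \ref{theorem 6}, parts 2) and 3), replacing the automorphism $\varphi$ by the antiautomorphism $\psi\in C_{\operatorname{AAut}(G)}(\phi)$. Throughout, set $f=\psi\phi=\phi\psi$. This $f$ is an antiautomorphism, so $f^{-1}\psi$ is an \emph{automorphism} of $G$. (In the statement, $\phi$ is of course the map defining $Q_i$; for $i=2,3,4$ it is itself an antiautomorphism, in which case $f$ is an automorphism and $f^{-1}\psi$ is an antiautomorphism. I would keep track of this parity in each case.)

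For each $i$, I would expand both sides of the antiautomorphism identity $\psi(x\ast_i y)=\psi(y)\ast_i\psi(x)$.

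For $i=1$, the left side is $\psi(y\phi(y^{-1}x))=f(y^{-1}x)\psi(y)$. The right side is $\psi(x)\phi(\psi(x)^{-1}\psi(y))=\psi(x)f(yx^{-1})$, using that $\psi$ reverses products. Applying $f^{-1}$, which reverses order again, puts both sides in the form $(\text{word in }x,y)\cdot f^{-1}\psi(\cdot)$. Setting $x=1$ (respectively $y=1$) then identifies the map $f^{-1}\psi$ explicitly, as $y\mapsto y^{\pm2}$ or something of that shape.

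The same manipulation works for $i=2,3,4$, with the extra hypothesis $x\psi(y)x^{-1}=\psi(xyx^{-1})$ available for $i=3,4$ when it is needed to move conjugations through $\psi$.

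Once $f^{-1}\psi$ has been identified as $y\mapsto y^{2}$ (or $y\mapsto y^{-2}$), I would split according to whether this map is a homomorphism or an antihomomorphism.

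\textbf{Cases $i=1,2$.} Here $f^{-1}\psi$ should be a homomorphism. The relation $(ab)^2=a^2b^2$ gives $ab=ba$, so $G$ is abelian. Conversely, if $G$ is abelian, then antiautomorphisms coincide with automorphisms, so $\psi\in\operatorname{Aut}(G)$ commutes with $\phi$. Part 2) of Theorem \ref{theorem 6} then applies after checking that the derived identity $f^{-1}\psi(y)=y^{2}$ is exactly the condition required. Alternatively, I would verify this direction directly by substituting into the expansion.

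\textbf{Cases $i=3,4$.} Here the parity flips and we get $(ab)^2=b^2a^2$. Exactly as in Theorem \ref{theorem 6} 3), this gives $[a,b]^2ab=ba$, so $[a,b]^3=1$ and $[a,b]$ commutes appropriately. From this I would conclude that $[G,G]$ is central of exponent $3$. For the converse, I would reverse the chain of equivalences: central commutators of order $3$ give $(ab)^2=b^2a^2$, which restores the required identity.

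The main obstacle is bookkeeping. Each use of $\psi$ or $f^{-1}$ reverses products, and a sign or order error changes the outcome between ``abelian'' and ``central exponent $3$''. The converse directions are also delicate, since setting $x=1$ only yields a necessary condition. To handle this, I would check that the full two-variable identity reduces to the one-variable one once $f^{-1}\psi(y)=y^{\pm2}$ is known to be multiplicative or antimultiplicative.
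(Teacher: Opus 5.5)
You follow the route the paper intends (it only says the result holds ``analogously'' to Theorem~\ref{theorem 6}). Your forward implication for $i=1,2$ is correct. The remaining steps fail, and they cannot be repaired, because the implications they aim at are false.

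\textbf{The missing observation.} You never use that $f^{-1}\psi=\phi^{-1}\psi^{-1}\psi=\phi^{-1}$. So the identity you obtain at $x=1$ reads $\phi^{-1}(y)=y^{2}$ for $i=1,3,4$, and $\phi^{-1}(y)=y^{-2}$ for $i=2$. For $i=2$ the map $\phi^{-1}$ is an antiautomorphism, not a homomorphism as your case split says, though it still yields $(ab)^2=a^2b^2$. This is a condition on $\phi$ alone, independent of $\psi$, and no hypothesis on $G$ supplies it.

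\textbf{Cases $i=1,2$.} Your converse uses this identity anyway. That is circular, since the identity was derived from $\psi\in\operatorname{AAut}(Q_i)$. Citing Theorem~\ref{theorem 6} 2) does not help, because its converse has the same defect. For a counterexample, let $G$ be a nontrivial abelian group and $\psi=\varepsilon$. Take $\phi=\mathrm{id}$ for $i=1$, or $\phi=\varepsilon$ for $i=2$, so that $x\ast_2 y=yxy^{-1}=x$. Then $Q_i$ is a trivial quandle. A trivial quandle with at least two elements has no antiautomorphisms, since $\psi(x)=\psi(x\ast y)=\psi(y)\ast\psi(x)=\psi(y)$. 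What your computation actually proves is this: $\psi\in\operatorname{AAut}(Q_i)$ if and only if $\phi^{-1}(y)=y^{2}$ (resp.\ $y^{-2}$) for all $y$, a condition which forces $G$ to be abelian.

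\textbf{Cases $i=3,4$.} Here both directions fail. The relation $(ab)^2=b^2a^2$ holds in every group of exponent $3$, because $(ab)^2=(ab)^{-1}=b^{-1}a^{-1}=b^2a^2$. So it cannot force $[G,G]$ to be central. The step $[a,b]^2ab=ba$ that you import from Theorem~\ref{theorem 6} is unjustified. From $abab=b^2a^2$ one only gets $[a,b]ab=b[a,b]^{-1}a$. Passing to $[a,b]^2ab=ba$ requires $[a,b]$ to commute with $b$, which is part of what is being proved.

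For explicit counterexamples, take $\phi=\psi=\varepsilon$. The side condition $x\varepsilon(y)x^{-1}=\varepsilon(xyx^{-1})$ holds, and $Q_3=Q_4=\Core(G)$. By Proposition~\ref{Thm4} b), $\psi\in\operatorname{AAut}(Q_i)$ exactly when $G$ has exponent $3$.
\begin{itemize}
\item The free Burnside group $B(3,3)$ has exponent $3$ and nilpotency class $3$, so it violates ``only if''.
\item The group $\mathbb{Z}_5$, for which $Q_i=\R_5$, violates ``if''.
\end{itemize}
Here too, your converse silently needs $\phi^{-1}(y)=y^{2}$, which is a condition on $\phi$ rather than on $[G,G]$.
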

Next, we provide the conditions under which an autiautomorphism of group $G$ induces an automorphism of $Q_{i},\,i=1,2,3,4$.
\begin{theorem}\label{theorem 3.2} Let $G$ be a group  and $\psi\in C_{\operatorname{AAut}(G)}(\phi)$. Then
\begin{itemize}
    \item [1)]

 $\psi\in \operatorname{Aut}(Q_{i})$ if and only if $\phi$ is a central automorphism, for $i=1,2$.
 \item[2)] $\psi\in \operatorname{Aut}(Q_{i})$ if and only if $y\phi^{-1}(y)\in Z(G)$, for $i=3,4$.
\end{itemize}
    \begin{proof} Throughout the
 proof, suppose that $\psi\phi=\phi\psi=f.$
    \begin{itemize}
    \item [1)]
    
           We  prove that $\psi \in \operatorname{Aut}(Q_{i})$ if and only if $\phi$ is a central automorphism, for $i=1,2$.
        \begin{itemize}
            \item [-] Let $i=1$,
for all $x,y \in G$, we have $\psi(x\ast_{1}  y)=\psi(y\phi(y^{-1}x))=\psi\phi(y^{-1}x)\psi(y)$ and
        \begin{eqnarray*}
         \psi(x)\ast_{1}  \psi(y)&=&\psi(y)\phi(\psi(y)^{-1}\psi(x))\\&=&\psi(y)\phi(\psi(y^{-1})\psi(x))=\psi(y)\phi\psi(xy^{-1}).
         \end{eqnarray*}
          The equality $$\psi(x\ast_{1}  y)=\psi(x)\ast_{1}  \psi(y)$$ holds if and only if
$$f^{-1}\psi(y)y^{-1}x=xy^{-1}f^{-1}\psi(y)$$ i.e.
         \begin{eqnarray*}
             1&=&x^{-1}yf^{-1}\psi(y^{-1})xy^{-1}f^{-1}\psi(y)\\&=&[y^{-1}x,x^{-1}f^{-1}\psi(y)]\\&=&[z,z^{-1}y^{-1}f^{-1}\psi(y)]\,\,\,\text{where}\,\,\, z=y^{-1}x\\&=&[z,z^{-1}y^{-1}\phi^{-1}(y)]\\&=&[z,y^{-1}\phi^{-1}(y)],
         \end{eqnarray*}
         hence $y^{-1}\phi^{-1}(y)\in Z(G)$.
 \item [-] Let $i=2$,
for all $x,y \in G$, we have $\psi(x\ast_{2}  y)=\psi(y\phi(yx^{-1}))=\psi\phi(yx^{-1})\psi(y)$ and
\begin{eqnarray*}
    \psi(x)\ast_{2} \psi(y)=\psi(y)\phi(\psi(y)\psi(x^{-1}))=\psi(y)\phi\psi(x^{-1}y). 
\end{eqnarray*}
   The equality $$\psi(x\ast_{2} y)=\psi(x)\ast_{2}\psi(y)$$ holds if and only if $f^{-1}\psi(y)x^{-1}y=yx^{-1}f^{-1}\psi(y)$ i.e.
       \begin{eqnarray*}
        1&=&y^{-1}xf^{-1}\psi(y^{-1})yx^{-1}f^{-1}\psi(y)\\&=&[x^{-1}y,y^{-1}f^{-1}\psi(y)]\\&=&[z,y^{-1}\phi^{-1}(y)]\,\,\,\text{where}\,\,\, z=x^{-1}y,
       \end{eqnarray*}
          hence $y^{-1}\phi^{-1}(y)\in Z(G)$.
    \end{itemize}
          \item[2)] We prove that $\psi\in\operatorname{Aut}(Q_{i})$ if and only if $y\phi^{-1}(y)\in Z(G)$, for $i=3,4$.
          
 Let $i=3$,
for all $x,y \in G$, we have $\psi(x\ast_{3}  y)=\psi(\phi(xy^{-1})y)=\psi(y)\psi\phi(xy^{-1})$ and
          \begin{eqnarray*}
              \psi(x)\ast_{3}\psi(y)=\phi(\psi(x)\psi(y^{-1}))\psi(y)=\phi\psi(y^{-1}x))\psi(y).
          \end{eqnarray*}
          The equality $$\psi(x\ast_{3}y)=\psi(x)\ast_{3} \psi(y)$$ holds if and only if $$f^{-1}\psi(y)xy^{-1}=y^{-1}xf^{-1}\psi(y)$$ i.e.
          \begin{eqnarray*}
              1&=&yx^{-1}f^{-1}\psi(y^{-1})y^{-1}xf^{-1}\psi(y)\\&=&yx^{-1}f^{-1}\psi(y^{-1})y^{-1}xy^{-1}yf^{-1}\psi(y)\\&=&[xy^{-1},yf^{-1}\psi(y)]\\&=&[z,y\phi^{-1}(y)]\,\,\,\text{where}\,\,\, z=xy^{-1},
          \end{eqnarray*}
          hence $y\phi^{-1}(y)\in Z(G)$. The proof for $Q_4$ is similar.
        \end{itemize}
    \end{proof}
\end{theorem}

%%%%%%%%%%%%%%%%%%%%%%%%%%%%%%%%%%%%%%%%%%%%%%%%%%%%%%%%%%%%%%%%%%%%%%%%%%%%%%%%%%%%%%%%%%%%%%%%%%%%%%%%%%%%%%%%%

\section{Automorphisms and antiautomorphisms on verbal quandles with one parameter } \label{s5}

 In this  section, we  study the following question: what are the connections between  $\operatorname{Aut}(G)$, $\operatorname{AAut}(G)$, $\operatorname{Aut}(P_{i})$ and $\operatorname{AAut}(P_{i})$?
 
 These quandles $P_{i}$ were defined in  section \ref{sec-prelim}.
\begin{theorem}\label{theorem 4.1}
    Let $G$ be a group, $\varphi\in \operatorname{Aut}(G)$ and $P_{i}(G)$ a verbal quandle with  parameter $c$. Then
 $\varphi$ induces an automorphism of $P_{i}(G)$ if and only if
 $c^{-1}\varphi^{-1}(c)\in Z(G)$.
\end{theorem}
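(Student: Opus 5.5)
The plan is a direct computation for each of the four operations $\circ_{i}$. In every case the difference between $\varphi(x\circ_i y)$ and $\varphi(x)\circ_i\varphi(y)$ should reduce to the question of whether conjugation by $c$ and conjugation by $\varphi(c)$ agree on all of $G$. That question is a centrality condition on $\varphi(c)c^{-1}$, which I will then translate into the stated condition on $c^{-1}\varphi^{-1}(c)$.

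For $P_1$, I expand both sides using that $\varphi$ is a homomorphism:
$$\varphi(x\circ_1 y)=\varphi(y)\varphi(c)^{-1}\varphi(y)^{-1}\varphi(x)\varphi(c),\qquad \varphi(x)\circ_1\varphi(y)=\varphi(y)c^{-1}\varphi(y)^{-1}\varphi(x)c .$$
After cancelling $\varphi(y)$ on the left and putting $u=\varphi(y)^{-1}\varphi(x)$, the equality becomes $\varphi(c)^{-1}u\varphi(c)=c^{-1}uc$. Since $\varphi$ is bijective, taking $y=1$ and $x$ arbitrary shows that $u$ ranges over all of $G$. Hence the identity holds for all $x,y$ if and only if $c\varphi(c)^{-1}\in Z(G)$, equivalently $\varphi(c)c^{-1}\in Z(G)$.

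The other three operations are handled in the same way. For $P_2$, cancelling $\varphi(y)$ on the left leaves a conjugate of $u=\varphi(x)\varphi(y)^{-1}$ by $c$ versus by $\varphi(c)$. For $P_3$ and $P_4$, I cancel $y$ on the right; what remains is $c^{-1}uc$ versus $\varphi(c)^{-1}u\varphi(c)$, with $u=\varphi(y)^{-1}\varphi(x)$ for $P_3$ and $u=\varphi(x)\varphi(y)^{-1}$ for $P_4$. In each case $u$ again runs over all of $G$ (set $y=1$), so I obtain the same criterion $\varphi(c)c^{-1}\in Z(G)$ for all four quandles.

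The remaining step, which is the only nonroutine point, is converting this criterion into $c^{-1}\varphi^{-1}(c)\in Z(G)$. Suppose $d=\varphi(c)c^{-1}\in Z(G)$, so $\varphi(c)=dc$. Applying $\varphi^{-1}$ gives $c=\varphi^{-1}(d)\varphi^{-1}(c)$. Because automorphisms preserve the center, $\varphi^{-1}(d)$ is central, and therefore $c^{-1}\varphi^{-1}(c)=\varphi^{-1}(d)^{-1}\in Z(G)$.

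Conversely, suppose $e=c^{-1}\varphi^{-1}(c)$ is central, so $\varphi^{-1}(c)=ce$. Applying $\varphi$ gives $c=\varphi(c)\varphi(e)$ with $\varphi(e)$ central. Hence $\varphi(c)c^{-1}=\varphi(e)^{-1}\in Z(G)$. This establishes both directions, and I expect nothing beyond careful bookkeeping of the cancellations.
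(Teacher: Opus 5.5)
Your proposal is correct and follows essentially the paper's route: expand both sides, cancel the outer factor $\varphi(y)$, and reduce to the requirement that conjugation by $c$ and by $\varphi^{\pm1}(c)$ agree on all of $G$. The only differences are cosmetic. The paper rewrites $c=\varphi(\varphi^{-1}(c))$ and uses injectivity to land directly on $c\varphi^{-1}(c)^{-1}\in Z(G)$, whereas you obtain $\varphi(c)c^{-1}\in Z(G)$ and transfer it using $\varphi^{-1}(Z(G))=Z(G)$; you also verify $P_2,P_3,P_4$ explicitly, where the paper only treats $P_1$.
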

\begin{proof}
    We claim that all the verbal quandles defined above satisfy this result. Therefore, it is suffices to prove it for an arbitrary verbal quandle.
    \\
Let $\varphi\in \operatorname{Aut}(G)$ and $P_{1}(G)$ a verbal quandle with one parameter  defined by
$$x\circ_{1} y=yc^{-1}y^{-1}xc,\,\,x,y \in G.$$ We will proved that $\varphi$ induces an automorphism on $\operatorname{P_{i}}(G)$ if and only if $c^{-1}\varphi^{-1}(c)\in Z(G)$.
$$\varphi(x\circ_{1}y)=\varphi(yc^{-1}y^{-1}xc)=\varphi(y)\varphi(c^{-1}y^{-1}xc)$$ and
$$\varphi(x)\circ_{1} \varphi(y)=\varphi(y)c^{-1}\varphi(y^{-1})\varphi(x)c=\varphi(y)\varphi(\varphi^{-1}(c^{-1})y^{-1}x\varphi^{-1}(c)).$$
The equality $$\varphi(x\circ_{1} y)=\varphi(x)\circ_{1}\varphi(y)$$ holds if only if $$c^{-1}y^{-1}xc=\varphi^{-1}(c^{-1})y^{-1}x\varphi^{-1}(c)$$ i.e. $$y^{-1}x=\varphi^{-1}(c)c^{-1}y^{-1}xc\varphi^{-1}(c^{-1}).$$ Hence $\varphi$ induces automorphism on $\operatorname{P_{1}}(G)$ if and only if $$y^{-1}x=(c\varphi^{-1}(c^{-1}))^{-1}y^{-1}xc\varphi^{-1}(c^{-1})\,\,\text{i.e.}\,\, c\varphi^{-1}(c^{-1})\in Z(G).$$
\end{proof}
\begin{theorem}\label{theorem 4.2}
    Let $G$ be a group,   $c\in G$ and $i\in\{1,2,3,4\}$. 
    \begin{itemize}
        \item [1)] If $\varphi\in \operatorname{Aut}(G)$ and $c\in \operatorname{Fix}(\varphi)$, then  
   $\varphi$ induces an antiautomorphism of $P_{i}$  if and only if
    $x=[x,c^{-1}]$ for all $x\in G$.
    \item[2)]  If $\psi\in \operatorname{AAut}(G)$ and $c\in \operatorname{Fix}(\psi)$, then $\psi$ induces an automorphism on $P_{i}$ if and only if
        $[c^{2},x^{-1}]=1$ for all $ x\in G$.
\end{itemize}
\end{theorem}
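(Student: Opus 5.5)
The plan is to prove both parts by the same kind of direct computation as in Theorem \ref{theorem 4.1}. I would first treat $P_{1}$, with $x\circ_{1}y=yc^{-1}y^{-1}xc$, and then indicate that $P_{2},P_{3},P_{4}$ are handled by the identical scheme. In each case I expand both sides of the required identity, use that the map is a bijection, and rename $u=\varphi(x)$, $v=\varphi(y)$ (or $u=\psi(x)$, $v=\psi(y)$), so that $u,v$ range over all of $G$. This turns the condition on the map into a two-variable identity in $G$ involving only $c$. The hypothesis $c\in\operatorname{Fix}(\varphi)$ (resp. $\operatorname{Fix}(\psi)$) is used exactly here: it lets me pull $c$ through the map unchanged.

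For part 1), $\varphi$ is an automorphism fixing $c$, so
$$\varphi(x\circ_{1}y)=vc^{-1}v^{-1}uc,$$
while the antiautomorphism condition asks this to equal
$$\varphi(y)\circ_{1}\varphi(x)=uc^{-1}u^{-1}vc.$$
After cancelling $c$ on the right, the condition becomes $vc^{-1}v^{-1}u=uc^{-1}u^{-1}v$ for all $u,v\in G$.
\begin{itemize}
\item \emph{Necessity.} Specialise $v=1$ to get $c^{-1}u=uc^{-1}u^{-1}$, i.e. a one-variable identity that I will rewrite in commutator form, aiming to match $u=[u,c^{-1}]$.
\item \emph{Sufficiency.} Assume the one-variable identity for every element. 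Substitute it for $vc^{-1}v^{-1}$ and for $uc^{-1}u^{-1}$ to recover the two-variable identity.
\end{itemize}

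For part 2), $\psi$ is an antiautomorphism fixing $c$, so
$$\psi(x\circ_{1}y)=c\,\psi(x)\,\psi(y)^{-1}c^{-1}\psi(y)=cuv^{-1}c^{-1}v,$$
and this must equal
$$\psi(x)\circ_{1}\psi(y)=vc^{-1}v^{-1}uc.$$
Again I specialise a variable ($v=1$, then $u=1$) to extract a centrality-type relation, and expect it to reduce to $c^{2}$ commuting with every element, i.e. $[c^{2},x^{-1}]=1$. For the converse I plug the relation back into the general identity.

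The main obstacle is the exact matching of the one-variable relation with the stated commutator forms, and the sufficiency direction. Setting $v=1$ only gives a necessary condition, and the general identity may not follow in an obvious way. There is also a consistency issue in part 1): putting $x=c$ into $x=[x,c^{-1}]$ already forces $c=1$ and then $G$ trivial. So I would check carefully whether the condition degenerates, and, if so, state the result as holding precisely in that degenerate situation. Finally, the passage to $P_{2},P_{3},P_{4}$ needs the same bookkeeping with the positions of $c^{\pm1}$ changed, and I would verify at least one of them explicitly rather than simply claiming ``similarly''.
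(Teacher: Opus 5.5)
Your treatment of 1) is sound, and the degeneracy you noticed is exactly what makes it true. The relation $c^{-1}u=uc^{-1}u^{-1}$ is $u^{2}=cuc^{-1}$, i.e.\ $u=[u,c^{-1}]$, and taking $u=c$ forces $c=1$ and then $G=1$. Substituting the relation back, as you planned, only reduces $vc^{-1}v^{-1}u=uc^{-1}u^{-1}v$ to $vu=uv$. What closes sufficiency is the degeneracy, or equivalently the fact that squaring then equals conjugation by $c$ and so is a homomorphism.

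The genuine gap is the converse in 2). Plugging $c^{2}\in Z(G)$ back into $cuv^{-1}c^{-1}v=vc^{-1}v^{-1}uc$ cannot succeed, because that implication is false. Your own $u=1$ specialisation already signals this: it yields $cv^{-1}c^{-1}v=vc^{-1}v^{-1}c$ for all $v$, which does not follow from centrality of $c^{2}$. Concretely, let $G=\Sigma_{3}$, $\psi=\varepsilon$ (inversion) and $c=(1\,2)$, so $\psi(c)=c$ and $c^{2}=1$. For $x=1$, $y=(1\,3)$ you get $\psi(1\circ_{1}y)=(cy)^{2}=yc$, whereas $\psi(1)\circ_{1}\psi(y)=(yc)^{2}=cy\neq yc$. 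The same choice breaks $P_{2}$, $P_{3}$ and $P_{4}$. For $P_{2}$ and $P_{3}$ even $c=1$ fails: they become $\Conj_{-1}(G)$ and $\Conj(G)$, and by part a) of the paper's theorem on $\Conj_{m}(G)$ an antiautomorphism of $G$ is then an automorphism only when all squares are central.

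For $P_{1}$ the correct criterion drops out of your computation. The identity is equivalent to $uA_{v}=B_{v}u$, where $A_{v}=v^{-1}c^{-1}vc^{-1}$ and $B_{v}=c^{-1}vc^{-1}v^{-1}=vA_{v}v^{-1}$. So it holds for all $u$ if and only if $A_{v}\in Z(G)$. Using $A_{1}=c^{-2}$ and $A_{v}=[v,c]c^{-2}$, the condition $A_{v}\in Z(G)$ for all $v$ is equivalent to $c^{2}\in Z(G)$ together with $[v,c]\in Z(G)$ for all $v\in G$.

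Hence only the \emph{only if} half of 2) can be proved, and that is also all the paper's argument establishes: it sets $y=1$ and never addresses the converse. Your write-up should either state 2) as a necessary condition only, or replace $[c^{2},x^{-1}]=1$ by the correct condition. That condition has to be computed separately for each $P_{i}$, since for $P_{2}$ and $P_{3}$ it differs from the one for $P_{1}$.
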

\begin{proof}

\begin{itemize}
    \item [1)]
     We prove that  $\varphi$ induces antiautomorphism on $P_{i}$  if and only if
        $x=[x,c^{-1}],\,\, x\in G,\,\, i=1,2,3,4$.\\
            - Let  $i=1$, for $x,y\in G$, we have 
   $$\varphi(x\circ_{1} y)=\varphi(yc^{-1}y^{-1}xc)$$ and
   $$\varphi(y)\circ_{1} \varphi(x)=\varphi(x)c^{-1}\varphi(x^{-1})\varphi(y)c=\varphi(xc^{-1}x^{-1}yc).$$
The equality $$\varphi(x\circ_{1} y)=\varphi(y)\circ_{1} \varphi(x)$$ holds if and only if $$x^{-1}c^{-1}y^{-1}xc=c^{-1}x^{-1}yc.$$
   Thus $$cx^{-1}yc^{-1}y^{-1}x=x^{-1}y.$$
Taking $y = 1$ yields
  $$x=x^{-1}cxc^{-1} \,\,\text{i.e.}\,\, x=[x,c^{-1}],\,\, x\in G.$$
             - Let  $i=2$, for $x,y\in G$, we have
        
         $$\varphi(x\circ_{2}y)=\varphi(yc^{-1}xy^{-1}c)$$ and
         $$\varphi(y)\circ_{2} \varphi(x)=\varphi(x)c^{-1}\varphi(y)\varphi(x^{-1})c=\varphi(xc^{-1}yx^{-1}c).$$
       The equality $$\varphi(x\circ_{2} y)=\varphi(y)\circ_{2} \varphi(x)$$ holds if and only if $$yc^{-1}xy^{-1}=xc^{-1}yx^{-1}\,\,\text{i.e.}\,\,xy^{-1}=x^{-1}ycy^{-1}xc^{-1}.$$
       Taking $y= 1$ yields $x=x^{-1}cxc^{-1}.$\\
         - Let  $i=3$, for $x,y\in G$, we have
        
         $$\varphi(x\circ_{3}y)=\varphi(c^{-1}y^{-1}xcy)$$ and
         $$\varphi(y)\circ_{3} \varphi(x)=c^{-1}\varphi(x)^{-1}\varphi(y)c\varphi(x)=\varphi(c^{-1}x^{-1}ycx).$$
       The equality $$\varphi(x\circ_{} y)=\varphi(y)\circ_{3} \varphi(x)$$ holds if and only if $$c^{-1}y^{-1}xcy=c^{-1}x^{-1}ycx\,\,\text{i.e.}\,\,y^{-1}x=x^{-1}ycxy^{-1}c^{-1}.$$
       Taking $y= 1$ yields $x=x^{-1}cxc^{-1}.$\\
  - Let  $i=4$, for $x,y\in G$, we have
   $$\varphi(x\circ_{4} y)=\varphi(c^{-1}xy^{-1}cy)$$ and
   $$\varphi(y)\circ_{4} \varphi(x)=c^{-1}\varphi(y)\varphi(x^{-1})c\varphi(x)=\varphi(c^{-1}yx^{-1}cx).$$
    The equality $$\varphi(x\circ_{4} y)=\varphi(y)\circ_{4} \varphi(x)$$ holds if and only if $$c^{-1}xy^{-1}cy=c^{-1}yx^{-1}cx.$$
    Hence $xy^{-1}c=yx^{-1}cxy^{-1}$ i.e. $xy^{-1}=[xy^{-1},c^{-1}]$.
\item[2)] We prove that $\psi$ induces an automorphism on $P_{i}$ if and only if
        $[c^{2},x^{-1}]=1,\,\, x\in G$.\\
     - Let  $i=1$, for $x,y\in G$, we have  $$\psi(x\circ_{1} y)=\psi(yc^{-1}y^{-1}xc)$$ and 
$$\psi(x)\circ_{1}\psi(y)=\psi(y)c^{-1}\psi(y)^{-1}\psi(x)c=\psi(cxy^{-1}c^{-1}y).$$\\
The equality  $$\psi(x\circ_{1} y)=\psi(x)\circ_{1} \psi(y)$$ holds if and only if $$yc^{-1}y^{-1}xc=cxy^{-1}c^{-1}y.$$
Taking $y=1$ yields $c^{-2}xc^{2}x^{-1}=1$  i.e.  $[c^{2},x^{-1}]=1$.\\
     - Let  $i=2$, for $x,y\in G$, we have $$\psi(x\circ_{2} y)=\psi(yc^{-1}xy^{-1}c)$$ and
         $$\psi(x)\circ_{2}\psi(y)=\psi(y)c^{-1}\psi(x)\psi(y^{-1})c=\psi(cy^{-1}xc^{-1}y).$$
        The equality  $$\psi(x\circ_{2} y)=\psi(x)\circ_{2} \psi(y)$$ holds if and only if $$yc^{-1}xy^{-1}c=cy^{-1}xc^{-1}y.$$
          Taking $y=1$ yields $c^{-2}xc^{2}x^{-1}=1$  i.e.  $[c^{2},x^{-1}]=1$.\\   
          - Let  $i=3$, for $x,y\in G$, we have $$\psi(x\circ_{3} y)=\psi(c^{-1}y^{-1}xcy)$$ and
         $$\psi(x)\circ_{3}\psi(y)=c^{-1}\psi(y)^{-1}\psi(x)c\psi(y)=\psi(ycxy^{-1}c^{-1}).$$
        The equality  $$\psi(x\circ_{3} y)=\psi(x)\circ_{3} \psi(y)$$ holds if and only if $$c^{-1}y^{-1}xcy=ycxy^{-1}c^{-1}.$$
          Taking $y=1$ yields $c^{-2}xc^{2}x^{-1}=1$  i.e.  $[c^{2},x^{-1}]=1$.\\ 
          - Let  $i=4$, for $x,y\in G$, we have $$\psi(x\circ_{4} y)=\psi(c^{-1}xy^{-1}cy)$$ and
         $$\psi(x)\circ_{4}\psi(y)=c^{-1}\psi(x)\psi(y)^{-1}c\psi(y)=\psi(ycy^{-1}xc^{-1}).$$
        The equality  $$\psi(x\circ_{4} y)=\psi(x)\circ_{4} \psi(y)$$ holds if and only if $$c^{-1}xy^{-1}cy=ycy^{-1}xc^{-1}.$$
          Taking $y=1$ yields $c^{-2}xc^{2}x^{-1}=1$  i.e.  $[c^{2},x^{-1}]=1$.
          \end{itemize}
\end{proof}

%%%%%%%%%%%%%%%%%%%%%%%%%%%%%%%%%%%%%%%%

{\bf Acknowledgements.} I am very grateful to Professor V. G. Bardakov of Novosibirsk State University, the director of this work, for his availability, patience, and attention while carrying out this work. I would also like to thank M. V. Neshchadim, P. Sokolov and Luc Ta  for their helpful suggestions. I would also like to thank the participants of the Evariste Galois seminar.

%%%%%%%%%%%%%%%%%%%%%%%%%%%%%%%%%%%%%%%%%%%%%%%%%%
\begin{center}
  
\end{center}

\end{document}